\numberwithin{equation}{section}
\theoremstyle{plain}
\newtheorem{theorem}{Theorem}[section]
\newtheorem{proposition}[theorem]{Proposition}
\newtheorem{propdef}[theorem]{Proposition-Definition}
\newtheorem{lemma}[theorem]{Lemma}
\newtheorem{corollary}[theorem]{Corollary}
\theoremstyle{definition}
\newtheorem{definition}[theorem]{Definition}
\theoremstyle{remark}
\newtheorem{remark}[theorem]{Remark}
\newcommand{\R}{\mathbb{R}}
\renewcommand{\P}{\mathbb{P}}
\newcommand{\cover}{\mathfrak{U}}
\newcommand{\currs}{\mathcal{C}}
\newcommand{\holo}{\mathcal{O}}
\newcommand{\opens}{\mathcal{U}}
\newcommand{\sheafE}{\mathcal{E}}
\newcommand{\sheafF}{\mathcal{F}}
\renewcommand{\dbar}{\bar{\partial}}
\DeclareMathOperator{\codim}{codim}
\DeclareMathOperator{\End}{End}
\DeclareMathOperator{\Ext}{Ext}
\DeclareMathOperator{\Hom}{Hom}
\DeclareMathOperator{\Homs}{\mathcal{H}\!\mathit{om}}
\DeclareMathOperator{\id}{id}
\DeclareMathOperator{\im}{im}
\DeclareMathOperator{\supp}{supp}
\newcommand{\defeq}{\vcentcolon=}
\title[A residue current associated with a twisting cochain]{A residue current associated with a twisting cochain: duality and comparison formula}
\author{Jimmy Johansson}
\begin{document}
\begin{abstract}
Let $\sheafF^\bullet$ be a complex of coherent $\holo_X$-modules over a complex manifold $X$. We give a construction of a residue current associated with this complex that generalizes Andersson and Wulcan's construction of a residue current associated with a coherent $\holo_X$-module $\sheafF$. The main ingredients are to give a construction of a residue current associated with a twisting cochain in the sense of Toledo and Tong and to establish a comparison formula for these currents that generalizes an analogous formula given by Lärkäng.
\end{abstract}
\maketitle
\section{Introduction}
Let $X$ be a complex manifold, and let
\begin{equation}
\label{eq:bundle-complex}
	0 \longrightarrow F^{-N} \longrightarrow
	\dots \longrightarrow F^{-1} \longrightarrow
	F^0 \longrightarrow 0
\end{equation}
be a generically exact complex of holomorphic vector bundles over $X$ equipped with Hermitian metrics. In \cite{AW1}, Andersson and Wulcan gave a construction of an $(\End F)$-valued current $R^F$. It is referred to as the residue current associated with the complex $F^\bullet$. The main result that they proved is that if the corresponding complex of locally free sheaves is exact at each level $r < 0$, then $R^F$ has the property that a holomorphic section $\phi$ of $F^0$ belongs to $\im \left(F^{-1} \to F^0 \right)$ if and only if $R^F \phi = 0$. This is referred to as the \emph{duality principle}.

Andersson's and Wulcan's construction is a generalization of the classical Coleff--Herrera product introduced in \cite{CH}. These developments in multivariable residue theory has led to many results in commutative algebra and complex geometry. For example it has led to results about membership problems and Brian\c{c}on--Skoda type results, see, e.g, \cite{AW4}. It has led to results regarding the solution to the $\dbar$-equation on singular spaces, see, e.g., \cite{AS}. Moreover, it has provided explicit constructions of Noetherian operators as described in \cite{And1}.

Suppose that \eqref{eq:bundle-complex}, as a complex of locally free $\holo_X$-modules, is a locally free resolution of a coherent $\holo_X$-module $\sheafF$. The current $R^F$ is sometimes referred to as the residue current associated with the sheaf $\sheafF$ rather than the complex itself.
However, given a coherent $\holo_X$-module $\sheafF$, one is only guaranteed that locally one can find resolutions of locally free sheaves of $\sheafF$. See \cite{Voisin}*{Corollary~A.5} for an example of a coherent $\holo_X$-module which does not admit a locally free resolution. Thus unless one is in the setting where global resolutions of locally free sheaves exist, e.g. $X = \P^n$, the problems that can be approached using techniques stemming from the construction above are of a local nature.

Toledo and Tong showed in \cite{TT1} that one can assemble such local resolutions in an appropriate way to construct a global object known as a twisted resolution that in some sense serves as a substitute of a global resolution of locally free sheaves. For example, the authors showed that the global Ext functor $\Ext^k(\sheafF,\mathcal{G})$ can be computed as the cohomology of a complex involving a twisted resolution of $\sheafF$. In \cite{JL} we gave a generalization to Andersson's and Wulcan's construction to associate a residue current with a twisted resolution. Our main application was the following. If $\mathcal{G} = G$ is a holomorphic vector bundle, then it is well known that $\Ext^k(\mathcal{F},G)$ also can be computed as the $k$th cohomology of the complex $\Hom(\mathcal{F},\currs^{0,\bullet}(G))$, where $\currs^{0,q}(G)$ denotes the sheaf of $G$-valued $(0,q)$-currents. In \cite{JL}, we gave an explicit isomorphism between these two representations of the Ext functor using the residue current associated with a twisted resolution $(F,a)$, see Section~\ref{section:twisting}. This may be considered as a global version of a corresponding statement for the Ext sheaves, which is due to Andersson, \cite{And1}, generalizing earlier work by Dickenstein--Sessa, \cite{DS}.

A twisted resolution give rise to a complex $\left( \bigoplus_{p+r=\bullet} C^p(\cover,F^r), D \right)$, and in this paper we will show the associated residue current enjoys a similar duality principle with respect to this complex.
\begin{theorem}
\label{thm:twisted-duality}
Let $(F,a)$ be a twisted resolution of a coherent $\holo_X$-module $\sheafF$, and let $R$ be the associated residue current. Moreover, let $\phi$ be an element in $\bigoplus_{p+r=k} C^p(\cover,F^r)$. We have that $D \psi = \phi$ for some $\psi$ if and only if $D \phi = 0$ and $R \phi = 0$.
\end{theorem}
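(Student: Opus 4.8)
The plan is to prove the two implications of Theorem~\ref{thm:twisted-duality} separately. The forward implication --- if $\phi = D\psi$, then $D\phi = 0$ and $R\phi = 0$ --- is the easy one. That $D\phi = D^{2}\psi = 0$ is immediate. For $R\phi = RD\psi = 0$ I would argue as in the easy direction of the Andersson--Wulcan duality in \cite{AW1}: the current $R$ built in Section~\ref{section:twisting} is pseudomeromorphic, and, by the exactness of the local resolutions in negative degrees, its component of \v{C}ech degree $p$ and form bidegree $(0,q)$ is supported on an analytic subset of codimension at least $q$; evaluating on $D\psi$ improves this to codimension strictly greater than $q$, so that component of $RD\psi$ vanishes by the dimension principle. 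The only genuinely new point compared with \cite{AW1} is the bookkeeping over the cover, i.e.\ checking the support and degree estimates \v{C}ech-degree by \v{C}ech-degree; I expect this to be routine if notationally heavy.

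The reverse implication is the substance. Assume $D\phi = 0$ and $R\phi = 0$; we must produce $\psi$ with $D\psi = \phi$. I would use the fact --- part of the construction, as in the untwisted case --- that $R$ comes with an auxiliary current $U$ satisfying $\nabla U = \id - R$, where $\nabla$ is the total differential on $\bigoplus_{p+r}C^{p}\bigl(\cover,\currs^{0,\bullet}(F^{r})\bigr)$ assembled from $D$ and $\dbar$. Since $\phi$ is a \v{C}ech cochain of holomorphic sections we have $\dbar\phi = 0$, and $D\phi = 0$ by hypothesis, so $\nabla\phi = 0$ and hence
\[
\nabla(U\phi) = (\nabla U)\phi = (\id - R)\phi = \phi - R\phi = \phi .
\]
Thus $\phi$ admits a \emph{current} potential for $\nabla$. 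What remains is to improve this to an honest potential for $D$ inside $\bigoplus_{p+r}C^{p}(\cover,F^{r})$; equivalently, to show that $\phi$, being $\nabla$-exact in the current total complex, is already $D$-exact in the holomorphic-cochain total complex.

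This descent is where the comparison formula of the present paper enters. First I would replace $(F,a)$ by its pullback to a refinement $\cover'$ all of whose finite intersections are Stein; this changes $(F,a)$ only up to a morphism of twisted resolutions, and the comparison formula relates the two residue currents --- and their auxiliary currents --- in a way that transports the identity $\nabla U = \id - R$ and preserves whether $\phi$ is a $D$-coboundary. Over a Stein cover the inclusion $\bigoplus_{p+r}C^{p}(\cover',F^{r}) \hookrightarrow \bigoplus_{p+r}C^{p}\bigl(\cover',\currs^{0,\bullet}(F^{r})\bigr)$ is a quasi-isomorphism: on each \v{C}ech level this is Dolbeault's lemma on a Stein set, and one passes to the total complexes via the \v{C}ech filtration and a standard spectral-sequence comparison. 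The existence of the current potential $U\phi$ then forces the class of $\phi$ in the cohomology of the holomorphic-cochain total complex to vanish, which is exactly the assertion that $D\psi = \phi$ for some $\psi$.

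The step I expect to be the main obstacle is precisely this reduction to a Stein cover at the level of currents: it requires the comparison formula to be formulated with enough flexibility to compare the residue current of $(F,a)$ with that of an arbitrary refinement, together with the induced morphism of twisting cochains, and to do so compatibly with the auxiliary currents, so that the potential $U\phi$ survives the refinement rather than merely some cohomologically equivalent object. A secondary difficulty --- already present in \cite{AW1} and \cite{JL} but compounded here by the twisting --- is to verify cleanly that $R$ and $U$ enjoy the stated support, closedness and degree properties in each \v{C}ech degree, since these underlie both the easy direction and the identity $\nabla U = \id - R$ used above.
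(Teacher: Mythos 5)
Your reverse implication starts exactly as the paper's proof of Proposition~\ref{prop:twisted-duality}(i): from \eqref{eq:Rdef} one gets $\nabla(U\phi)=(\id-R)\phi=\phi$, and one must then descend from a current-valued $\nabla$-potential to a holomorphic $D$-potential. But your detour through a refinement and the comparison formula is both unnecessary and unsupported. It is unnecessary because the standing assumption of Section~\ref{section:twisting} is that $\cover$ is a Stein cover, and finite intersections of Stein open subsets of a complex manifold are again Stein, so the $\dbar$-equations can be solved componentwise on each $\opens_{\alpha_0\dots\alpha_p}$ with no refinement; the paper does exactly this by an explicit zig-zag, writing $v=U\phi$, solving $\dbar w^{\ell}=v^{\ell+1}+Dw^{\ell+1}$, and taking $\psi=v^{0}+Dw^{0}$ (your Dolbeault-lemma/spectral-sequence formulation of the descent is equivalent in substance to this). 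It is unsupported because Theorem~\ref{thm:comparison} compares two twisting cochains over the \emph{same} cover; it provides no mechanism for passing to a refinement, transporting the potential $U\phi$, or comparing $D$-exactness over different covers --- precisely the step you yourself flag as the main obstacle. That obstacle is self-inflicted and should simply be deleted.

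The forward implication, which you treat as easy, is where the genuine gap lies. Your mechanism --- that applying $R$ to $D\psi$ ``improves'' the codimension of the support of each component beyond its bidegree, so the dimension principle kills it --- is not justified and does not work as stated. First, $R=R'+R(U)$ by \eqref{eq:Rcomp}, and since a twisted resolution of $\sheafF$ need not be generically exact at level $0$ (e.g.\ when $\supp\sheafF=X$), the part $R'$ is an almost semimeromorphic current that is generically smooth; it has no small support, so no support/dimension-principle argument applies to it. Second, even for the residue part $R(U)$ there is no reason why evaluation on $D\psi$ should shrink supports. The paper's argument is structural: since $\nabla R=0$ and $\dbar\psi=0$, one has $R\,D\psi=\nabla(R\psi)$, and $R\psi=0$ because the components of $\psi$ take values in $F^{-\ell}$ with $\ell>-k$ while $R^{\ell}_{\bullet}=0$ for such $\ell$ (Proposition~\ref{prop:twisted-duality}(ii)). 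The latter vanishing is the real content of this direction: for $R(U)$ it is Corollary~\ref{cor:Rvanish}, which rests on the Buchsbaum--Eisenbud codimension estimates $\codim Z^{k}_{\alpha}\geq k$ fed into Proposition~\ref{prop:R-vanish}; for $R'$ it is \cite{JL}*{Proposition~4.4}. Your sketch contains neither the identity $R\,D\psi=\nabla(R\psi)$ nor any treatment of $R'$, so this half of the theorem is not proved by your proposal.
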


Let us return again to the residue currents when \eqref{eq:bundle-complex} is a locally free resolution of a coherent $\holo_X$-module $\sheafF$. The construction of $R^F$ depends both on the choice of locally free resolution as well as Hermtian metrics, but it is unique up to homotopy in the following sense. Suppose $E^\bullet$ is another locally free resolution of $\sheafF$. It is well known that one can find a chain map $\varphi: E^\bullet \to F^\bullet$ that commutes with the augmentation maps  $F^\bullet \to \sheafF$ and $E^\bullet \to \sheafF$. Lärkäng showed in \cite{Lar} that the associated residue currents $R^F$ and $R^E$ are related via
\begin{equation}
\label{eq:comparison}
	R^F \varphi - \varphi R^E = \nabla M,
\end{equation}
where $M$ is the current described in \cite{Lar}*{Theorem~1.2}, and $\nabla$ is the differential on the total complex of the double complex $\currs^{0,\bullet}(\Hom^\bullet(E,F))$, i.e., $\Hom(E,F)$-valued $(0,*)$-currents. The formula \eqref{eq:comparison} is referred to as the \emph{comparison formula}.

Twisted resolutions are instances of a concept known as \emph{twisting cochains}. The main aim of this paper is to show how one can associate a residue current to an arbitrary twisting cochain, and we will show that this residue current is unique up to a similar homotopy as in the local case. To do this we will give a generalization of Lärkäng's comparison formula.
\begin{theorem}
Let $\sheafF^\bullet$ and $\mathcal{E}^\bullet$ be complexes of coherent $\holo_X$-modules, and let $f: \sheafE^\bullet \to \sheafF^\bullet$ be a chain map. Moreover, let $(F,a)$ and $(E,b)$ be twisted resolutions of $\sheafF^\bullet$ and $\mathcal{E}^\bullet$ respectively, and let $R^F$ and $R^E$ be the associated residue currents. There exists a morphism $\varphi: (E,b) \to (F,a)$ of twisted resolutions, and we have the following homotopy:
\begin{equation}
\label{eq:Rhomotopy}
	R^F \varphi - \varphi R^E = \nabla M,
\end{equation}
where $M$ is defined by \eqref{eq:Mdef}.
\end{theorem}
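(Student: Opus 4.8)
The proof has two parts: first producing the morphism $\varphi$, then establishing the homotopy \eqref{eq:Rhomotopy}. For the first part the plan is the standard lifting procedure for twisting cochains. Passing to a common refinement $\cover$ of the two defining covers, one builds $\varphi=\varphi^0+\varphi^1+\cdots$ by induction on the \v{C}ech degree: $\varphi^0_\alpha$ is a chain map $E^\bullet_\alpha\to F^\bullet_\alpha$ lifting $f|_{U_\alpha}$, which exists because $F^\bullet_\alpha\to\sheafF^\bullet|_{U_\alpha}$ is a quasi-isomorphism and the $E^\bullet_\alpha$ are complexes of free modules, and at each subsequent step the obstruction to adding one more term is a cocycle in a complex that is exact in the relevant degrees (since the $F^\bullet_\alpha$ resolve $\sheafF^\bullet$), hence a coboundary. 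Equivalently one checks the morphism condition $D\varphi=0$ (for the total differential $D$ on $\bigoplus_p C^p(\cover,\Hom^\bullet(E,F))$) one \v{C}ech degree at a time; by the same argument any two choices of $\varphi$ are $D$-homotopic.

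For the second part, recall that the residue current of a twisted resolution is accompanied by a form-valued current: after fixing Hermitian metrics and forming the minimal inverses of the differentials, one obtains by analytic continuation in a regularizing parameter (via resolution of singularities, as in the construction of $R^F$ itself) a current $U^F$ with $\nabla U^F=\id-R^F$, where $\nabla$ is the total differential — the sum of $\dbar$, the \v{C}ech differential, and the terms coming from $a$ — on the $\End$-valued currents of $(F,a)$; likewise $(E,b)$ yields $U^E$ with $\nabla U^E=\id-R^E$. Up to the precise form of the regularization and a sign, the current $M$ of \eqref{eq:Mdef} is $\varphi U^E-U^F\varphi$, a $\Hom(E,F)$-valued current. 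Granting this, \eqref{eq:Rhomotopy} is a short computation: $\varphi$ being a morphism of twisted resolutions is precisely the condition $\nabla\varphi=0$ (as $\varphi$ is holomorphic the $\dbar$-part vanishes), so the Leibniz rule on the total complex of $\currs^{0,\bullet}(\Hom^\bullet(E,F))$ gives
\[
\nabla M=\varphi\,\nabla U^E-(\nabla U^F)\,\varphi=\varphi(\id-R^E)-(\id-R^F)\varphi=R^F\varphi-\varphi R^E,
\]
the identity currents cancelling since $\varphi\,\id=\id\,\varphi=\varphi$. It is convenient to package this through the mapping cone of $\varphi$, which is again a twisted resolution, of $\mathrm{Cone}(f)$: for homotopy data on it chosen block-triangularly with respect to those on $E$ and $F$, the associated $U$ and $R$ acquire block form with diagonal blocks $U^E,U^F$ and $R^E,R^F$ and with (a sign times) $M$ as the off-diagonal block, and the identity $\nabla R=0$, valid for the residue current of any twisted resolution since $R=\id-\nabla U$ and $\nabla^2=0$, becomes \eqref{eq:Rhomotopy} in the off-diagonal block.

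The substance of the proof is therefore not this formal computation but the two inputs it rests on. First, one must set up the analytic continuation behind \eqref{eq:Mdef} so that $M$ is genuinely a well-defined current and the Leibniz manipulation above is legitimate; in the twisting-cochain setting this calls for a \emph{joint} continuation in the parameters attached to $F$, $E$ and $\varphi$, together with the verification that evaluation at the origin commutes with $\nabla$, with composition with the holomorphic cochain $\varphi$, and with the cup products of \v{C}ech cochains. Second, one must carry the bookkeeping of the triple grading — \v{C}ech degree, resolution degree, current bidegree — and the attendant signs, which is heavier than in Lärkäng's local case \cite{Lar} because $\nabla$ now carries extra \v{C}ech- and $a$-, $b$-terms and because $\varphi$ is a \v{C}ech cochain of bundle maps rather than a single chain map. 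Verifying that the computation survives these complications — and, in the cone formulation, that the homotopy data on $\mathrm{Cone}(\varphi)$ can be arranged block-triangularly and compatibly with the regularization — is the technical heart of the argument; I expect this, rather than the homological algebra producing $\varphi$, to be the main obstacle.
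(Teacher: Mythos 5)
The first half of your plan (constructing $\varphi$) is fine and matches the paper, which obtains $\varphi$ from a general result of Wei (Proposition~2.33 there); your inductive lifting is the same idea as the paper's direct argument in the special case of Proposition~\ref{prop:morphism2}. The gap is in the second half. The theorem claims the homotopy for the \emph{specific} current $M=M'+R(U^F\varphi U^E)$ of \eqref{eq:Mdef}, and your pivotal assertion that this $M$ is, up to regularization and sign, just $\varphi U^E-U^F\varphi$ is false as an identity of currents and is precisely what has to be proved: the two differ by the (generally nonzero) term $\nabla(U^F\varphi U^E)$, and $M$ moreover contains the extra piece $M'=(R^F)'\varphi U^E-U^F\varphi(R^E)'$ which your candidate ignores. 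Your computation $\nabla(\varphi U^E-U^F\varphi)=\varphi(\id-R^E)-(\id-R^F)\varphi=R^F\varphi-\varphi R^E$ is correct (multiplication by the holomorphic cochain $\varphi$ is harmless, so this is not where the analytic difficulty lies), but without showing that $M$ and $\varphi U^E-U^F\varphi$ differ by a $\nabla$-exact current you cannot transfer it to the $M$ of \eqref{eq:Mdef}. Note also that the tempting global Leibniz expansion of $\nabla(U^F\varphi U^E)$ is not available, since it would involve products such as $R(U^F)\varphi U^E$ of residue currents with $U$-type currents, which are not defined.

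The paper closes exactly this gap, and not by any joint analytic continuation: one restricts to $\cover'$, where $U^F$ and $U^E$ are smooth and satisfy $\nabla(U^F|_{\cover'})=\id-(R^F)'$ and $\nabla(U^E|_{\cover'})=\id-(R^E)'$, applies the Leibniz rule there to $U^F\varphi U^E|_{\cover'}$, and then invokes the SEP-based identity \eqref{eq:nabla-A-B}, $R(A)=B-\nabla A$ whenever $\nabla(A|_{\cover'})=B|_{\cover'}$ with $A,B$ almost semimeromorphic, for $A=U^F\varphi U^E$. This yields $M'+R(U^F\varphi U^E)=\varphi U^E-U^F\varphi-\nabla(U^F\varphi U^E)$, after which your final computation finishes the proof; this is Theorem~\ref{thm:comparison}, and the stated theorem is then that result combined with Proposition~\ref{prop:morphism}. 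Your mapping-cone packaging suffers from the same lacuna: the minimal inverses on a cone are not block-triangular in general, and identifying the off-diagonal block of a cone's residue current with the $M$ of \eqref{eq:Mdef} would require essentially the same restriction-to-$\cover'$ argument, so the cone does not circumvent the missing step.
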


The motivation for this construction comes from the following more general point of view.
If we view $\sheafF$ as a complex concentrated in degree 0, then we have a quasi-isomorphism $F^\bullet \to \sheafF$, i.e., a chain map that induces isomorphisms of the cohomology groups. This motivates the following generalization where we wish to associate a residue current with a complex of coherent $\holo_X$-modules
\[
	0 \longrightarrow \mathcal{F}^{-N} \longrightarrow
	\dots \longrightarrow \mathcal{F}^{-1} \longrightarrow
	\mathcal{F}^0 \longrightarrow 0.
\]
It is well known that locally one can find a complex of locally free $\holo_X$-modules and a quasi-isomorphism $F^\bullet \to \sheafF^\bullet$. In \cite{OTT2}, O'Brian, Toledo, and Tong generalized the notion of twisted resolutions to this more general situation. In this case it is no longer true that the complexes of sheaves are generically exact, but we will show that with minor modifications the same construction that we used in \cite{JL} will yield a well-defined current that we shall refer to as the residue current associated with $\sheafF^\bullet$. As mentioned, we will show that this construction is independent of twisted resolution up to a certain homotopy in the following sense.
\begin{theorem}
\label{thm:Rhomotopy}
Let $\sheafF^\bullet$ be a complex of coherent $\holo_X$-modules. Let $(F,a)$ and $(E,b)$ be twisted resolutions, and let $R^F$ and $R^E$ be the associated residue currents. There exist morphisms $\varphi: (E,b) \to (F,a)$ and $\psi: (F,a) \to (E,b)$ such that $\varphi \psi$ is homotopic to the identity, i.e., $\varphi \psi - \id = \nabla \alpha$, and $R^F$ is homotopic to $\varphi R^E \psi$. The homotopy is given by
\[
	R^F - \varphi R^E \psi = \nabla(M \psi - R^F \alpha),
\]
where $M$ is defined by \eqref{eq:Mdef}.
Analogously, we have that $R^E$ is homotopic to $\psi R^F \varphi$ via a similar formula.
\end{theorem}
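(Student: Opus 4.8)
The plan is to deduce this from the comparison formula \eqref{eq:Rhomotopy} of the preceding theorem, applied to the identity chain map of $\sheafF^\bullet$, together with the fact that any two morphisms of twisted resolutions lifting the same chain map differ by a $\nabla$-boundary. First I would apply the preceding theorem to $\id\colon\sheafF^\bullet\to\sheafF^\bullet$ with $(F,a)$ on the target side and $(E,b)$ on the source side; this produces a morphism $\varphi\colon(E,b)\to(F,a)$ of twisted resolutions lifting $\id$, together with the current $M$ of \eqref{eq:Mdef} such that $R^F\varphi-\varphi R^E=\nabla M$. Interchanging the roles of the two resolutions produces a morphism $\psi\colon(F,a)\to(E,b)$ lifting $\id$; the companion current it supplies will not be needed for the first formula. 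Then $\varphi\psi\colon(F,a)\to(F,a)$ and $\id_{(F,a)}$ are both morphisms of twisted resolutions lifting $\id_{\sheafF^\bullet}$, so there is an $\alpha$, of degree $-1$ in the total complex, with $\varphi\psi-\id=\nabla\alpha$.

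Next I would combine these identities. Substituting $\varphi R^E=R^F\varphi-\nabla M$ and $\varphi\psi=\id+\nabla\alpha$ gives
\[
	R^F-\varphi R^E\psi = R^F - R^F(\varphi\psi) + (\nabla M)\psi = -R^F(\nabla\alpha) + (\nabla M)\psi .
\]
Since $\psi$ is a morphism of twisted resolutions we have $\nabla\psi=0$, and since $R^F$ is $\nabla$-closed by its construction we have $\nabla R^F=0$; the Leibniz rule for $\nabla$ then gives $(\nabla M)\psi=\nabla(M\psi)$ and $R^F(\nabla\alpha)=\pm\nabla(R^F\alpha)$, and since every homogeneous piece of $R^F$ has total degree $0$ the sign is such that $R^F-\varphi R^E\psi=\nabla(M\psi-R^F\alpha)$, which is the asserted homotopy. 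The statement for $R^E$ follows from the same computation with $(F,a)$ and $(E,b)$ interchanged, using the companion current $M'$ from the second application of the comparison formula together with a homotopy $\psi\varphi-\id_{(E,b)}=\nabla\alpha'$.

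The one step that is not routine bookkeeping is the claim that two morphisms of twisted resolutions lifting the same chain map differ by a $\nabla$-boundary, which is what is needed to produce $\alpha$ (and $\alpha'$): this is the analogue for twisting cochains of the homotopy-uniqueness of lifts of chain maps between locally free resolutions, and it is where the acyclicity built into a twisted resolution is genuinely used; I would establish it along the lines of \cite{TT1} and \cite{OTT2}. The remainder of the argument only requires recalling from the construction that $\nabla R^F=\nabla R^E=0$ and keeping careful track of signs and degrees in the total complex.
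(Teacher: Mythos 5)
Your proposal is correct and follows essentially the same route as the paper: obtain $\varphi$, $\psi$, and a homotopy $\varphi\psi-\id=\nabla\alpha$, then multiply the comparison formula $R^F\varphi-\varphi R^E=\nabla M$ on the right by $\psi$ and use $\nabla\psi=0$, $\nabla R^F=0$, and $\deg R^F=0$ exactly as you do. The only difference is bookkeeping of sources: the paper gets both morphisms and the homotopy $\varphi\psi\simeq\id$ in one stroke from \cite{Wei}*{Proposition~2.33} (its Lemma~\ref{lemma:phi-homotopy}), whereas you would derive the homotopy from a uniqueness-up-to-homotopy statement for lifts, which is the same content and is likewise delegated to the literature.
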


This paper is organized as follows. Residue currents arise as residues of almost semimeromorphic currents and they belong to a class known as pseudomeromorphic currents. We shall begin in Section~\ref{section:pseudo} by recalling the necessary definitions and results about these currents. In Section~\ref{section:twisting} we will give a brief introduction to twisting cocycles and twisted resolutions. In Section~\ref{section:residue} we describe the construction of a residue current associated with a twisting cochain. In Section~\ref{section:comparison} we establish a comparison formula for these currents. Finally, in Section~\ref{section:morphisms} we describe how one can associate a residue current with a bounded complex of coherent $\holo_X$-modules and make precise its uniqueness up to homotopy.
\section{Pseudomeromorphic and almost semimeromorphic currents}
\label{section:pseudo}
Let $s$ be a holomorphic section of a Hermitian holomorphic line bundle $L$ over $X$.
The \emph{principal value current} $[1/s]$ can be defined as $[1/s] \defeq \lim_{\epsilon \to 0} \chi(|s|^2/\epsilon)\frac{1}{s}$,
where $\chi : \R \to \R$ is a smooth cut-off function, i.e., $\chi(t) = 0$ in a neighborhood of zero and $\chi(t) = 1$ when $|t| \gg 1$.
A current is \emph{semimeromorphic} if it is of the form $[\omega/s] \defeq \omega[1/s]$, where $\omega$ is a smooth form with values in $L$.
A current $a$ is \emph{almost semimeromorphic} on $X$, written $a \in ASM(X)$, if there is a modification $\pi : X' \to X$ such that
$$a=\pi_*(\omega/s),$$ where $\omega/s$ is a semimeromorphic current in $X'$.
The almost semimeromorphic currents on $X$ form an algebra over
smooth forms.

Almost semimeromorphic functions are special cases of so-called pseudomeromorphic currents, as introduced in \cite{AW2}.
The class of pseudomeromorphic currents is closed under multiplication with smooth forms and under $\dbar$.
One important property of pseudomeromorphic currents is that they satisfy the following \emph{dimension principle}.

\begin{proposition} \label{prop:dimPrinciple}
Let $T$ be a pseudomeromorphic $(*,q)$-current on $X$ with support on a subvariety $Z$.
If $\codim Z > q$, then $T = 0$.
\end{proposition}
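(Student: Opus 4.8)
The plan is to reduce the statement to one structural property of pseudomeromorphic currents and then induct on $\dim Z$. Since the assertion is local, I would work on a fixed coordinate ball $U\cong\{|z|<1\}\subseteq\C^{n}$ and invoke the following, which is standard for pseudomeromorphic currents and follows from the structure theorem in \cite{AW2} (locally, $T$ is a finite sum of pushforwards $\pi_{*}\tau$ of elementary currents under compositions of modifications, simple projections and open inclusions): \emph{if a holomorphic function $h$ vanishes on $\supp T$, then $\bar h\,T=0$, and hence also $d\bar h\wedge T=0$.} The second relation is a formal consequence of the first applied to every pseudomeromorphic current: $\bar\partial T$ is pseudomeromorphic with support in $\supp T$, so $0=\bar\partial(\bar h\,T)=(\bar\partial\bar h)\wedge T+\bar h\,\bar\partial T=d\bar h\wedge T$, the last term vanishing since $h$ vanishes on $\supp\bar\partial T$. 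I expect the genuinely delicate point of the whole argument to be the first relation $\bar h\,T=0$: it requires controlling $\overline{\pi^{*}h}\cdot\tau$ on the elementary pieces and understanding how $\supp\tau$ lies over $\supp(\pi_{*}\tau)$ through the modifications and projections, and I would import it from \cite{AW2} rather than reprove it here.

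Granting this property, I would first handle the model case in which $Z$ is, in suitable local holomorphic coordinates, the coordinate subspace $W=\{z_{1}=\dots=z_{c}=0\}$ with $c=\codim W>q$. Applying the property with $h=z_{1},\dots,z_{c}$ gives $\bar z_{i}\,T=0$ and $d\bar z_{i}\wedge T=0$ for $i=1,\dots,c$, and I would show that these relations force the factor $d\bar z_{1}\wedge\cdots\wedge d\bar z_{c}$ into $T$. For $i=1$, write $T=d\bar z_{1}\wedge A+B$ with $A,B$ free of $d\bar z_{1}$; from $\bar z_{1}\,T=0$ one reads off $\bar z_{1}A=\bar z_{1}B=0$, and $\bar z_{1}B=0$ exhibits $B$, as a current in the variable $z_{1}$, as a distribution supported at the origin built only from $\partial_{z_{1}}$-derivatives of $\delta_{0}(z_{1})$; then $0=d\bar z_{1}\wedge T=d\bar z_{1}\wedge B$ together with the linear independence of the currents $\partial_{z_{1}}^{\,j}\delta_{0}(z_{1})\,d\bar z_{1}$ forces $B=0$, so $T=d\bar z_{1}\wedge A$. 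Since wedging with the fixed $(0,1)$-form $d\bar z_{1}$ is injective on currents not containing it, the relations $\bar z_{i}\,T=0$ and $d\bar z_{i}\wedge T=0$ pass to $A$ for $i\ge 2$, so iterating yields $T=d\bar z_{1}\wedge\cdots\wedge d\bar z_{c}\wedge T'$. Comparing degrees, the right-hand side has antiholomorphic degree at least $c>q$, whereas $T$ is of bidegree $(*,q)$; hence $T=0$.

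Finally I would pass to an arbitrary subvariety $Z$ by induction on $\dim Z$, the case $Z=\emptyset$ being trivial. Let $Z_{\mathrm{sing}}$ be the singular locus of $Z$, a proper subvariety, so that $\dim Z_{\mathrm{sing}}<\dim Z$ and $\codim Z_{\mathrm{sing}}\ge\codim Z>q$. Near any point of $Z\setminus Z_{\mathrm{sing}}$ one can choose coordinates making $Z$ a coordinate subspace of codimension $\codim Z>q$, so the model case shows that $T$ vanishes in a neighbourhood of $Z\setminus Z_{\mathrm{sing}}$; therefore $\supp T\subseteq Z_{\mathrm{sing}}$. Now $T$ is again pseudomeromorphic of bidegree $(*,q)$, with support contained in the strictly smaller-dimensional subvariety $Z_{\mathrm{sing}}$ whose codimension still exceeds $q$, so the inductive hypothesis gives $T=0$. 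Everything after the structural input of the first paragraph is soft, which is why I single that input out as the main obstacle.
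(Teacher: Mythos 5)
The paper offers no proof of this proposition at all --- it is recalled as background from the theory of pseudomeromorphic currents developed in \cite{AW2} (see also \cite{AW3} for the slightly larger class used in this paper) --- so there is no in-paper argument to compare yours with; judged on its own, your proof is correct and is essentially the standard argument for this fact. The one nontrivial input is exactly the one you flag: that $\bar h\,T=0$ whenever $h$ is holomorphic and vanishes on $\supp T$, from which $d\bar h\wedge T=0$ follows as you say by applying the same property to the pseudomeromorphic current $\bar\partial T$. That annihilation property is indeed a theorem in \cite{AW2}/\cite{AW3}, proved there from the local structure of pseudomeromorphic currents and not via the dimension principle, so quoting it introduces no circularity. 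Two minor remarks on the model case: the analysis of $B$ in terms of $\partial_{z_1}$-derivatives of $\delta_0(z_1)$ is superfluous, since $B$ is free of $d\bar z_1$ and $d\bar z_1\wedge B=0$ already forces $B=0$ (so only the relations $d\bar z_i\wedge T=0$ are actually used); and the concluding degree comparison is cleanest if you record that the iteration yields $T=d\bar z_1\wedge\cdots\wedge d\bar z_c\wedge T'$ with $T'$ free of $d\bar z_1,\dots,d\bar z_c$, so that injectivity of wedging with $d\bar z_1\wedge\cdots\wedge d\bar z_c$ on such currents, together with $q<c$, gives $T'=0$ and hence $T=0$. The reduction to smooth points by induction over $\dim Z$, using $\supp T\subseteq Z_{\mathrm{sing}}$ after the model case and $\codim Z_{\mathrm{sing}}\geq\codim Z>q$, is the standard stratification step and is carried out correctly.
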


Given a pseudomeromorphic current $T$ and a subvariety $V$, in \cite{AW2} was introduced a restriction of $T$ to $X\setminus V$,
which is a pseudomeromorphic current on $X$ defined by $\mathbf{1}_{X \setminus V} T \defeq \lim_{\epsilon \to 0} \chi(|F|^2/\epsilon) T$, where $\chi$
is a cut-off function as above, and $F$ is a section of a holomorphic vector bundle such that $V = \{ F = 0 \}$.
A pseudomeromorphic current $T$ on $X$ is said to have the standard extension property (SEP) if $\mathbf{1}_{X \setminus V} T = T$
for any subvariety $V$ of positive codimension.
It follows from the dimension principle and the fact that the restriction commutes with multiplication with smooth forms that
almost semimeromorphic currents have the SEP. In particular, if $\alpha$ is a smooth form on $X \setminus V$, and $\alpha$ has
an extension as an almost semimeromorphic current $a$ on $X$, then the extension is given by
\begin{equation} \label{eq:asmExtension}
    a = \lim_{\epsilon \to 0} \chi(|F|^2/\epsilon) \alpha.
\end{equation}

Let $A$ be an almost semimeromorphic current on $X$. Let $Z$ be the smallest subvariety of $X$ of positive codimension such that $a$
is smooth outside of $Z$. By \cite{AW3}*{Proposition~4.16}, $\mathbf{1}_{X \setminus Z} \dbar A$ is almost semimeromorphic on $X$.
The \emph{residue} $R(A)$ of $A$ is defined by
\begin{equation}
\label{eq:residue-def}
	R(A) \defeq \dbar A - \mathbf{1}_{X \setminus Z}\dbar A.
\end{equation}
Note that
\begin{equation} \label{eq:resSupport}
    \supp R(A) \subseteq Z.
\end{equation}
Since $A$ is almost semimeromorphic, and thus has the SEP, it follows
by \eqref{eq:asmExtension} that
\begin{equation}
\label{eq:residue}
    R(A)=\lim_{\epsilon \to 0}
    \left(\dbar(\chi_\epsilon A) - \chi_\epsilon \dbar A \right)
    = \lim_{\epsilon \to 0} \dbar\chi_\epsilon \wedge A,
\end{equation}
where $\chi$ is as above, $F$ is a section of a vector bundle such that $\{ F = 0 \} \supseteq Z$, and $F \not\equiv 0$
and $\chi_\epsilon = \chi(|F|^2/\epsilon)$.
It follows directly from for example \eqref{eq:residue} that if $\psi$ is a smooth form, then
\begin{equation} \label{eq:residueSmooth}
    R(\psi \wedge a) = (-1)^{\deg \psi} \psi\wedge R(a).
\end{equation}
\section{Twisting cochains}
\label{section:twisting}
Throughout this paper, let $X$ be a complex manifold of dimension $n$, and let $\cover = (\opens_\alpha)$ be a covering of $X$ by open sets. Unless otherwise specified, we will assume these to be Stein. This condition is not necessary for the definitions that we shall make in this section, but it will be used to prove various results in the next section.
We will use the notation $\opens_{\alpha_0 \dots \alpha_p} \defeq \opens_{\alpha_0} \cap \dots \cap \opens_{\alpha_p}$.

In this section, we will mainly recall the relevant parts about twisting cochains, twisted resolutions and twisted complexes from \cite{TT1}.
This material is essentially the same as described in Sections 1 and 2 of \cite{TT1}.
As in \cite{JL}, we will also incorporate this theory with the theory of residue currents from \cite{AW1}. To get our sign convention consistent
with both \cite{TT1} and \cite{AW1}, we consider current-valued analogues of the objects described in \cite{TT1}.

Throughout this section we let $E \defeq (E_\alpha)$, $F \defeq (F_\alpha)$, and $G \defeq (G_\alpha)$ be families of bounded graded holomorphic vector bundles over $\opens_\alpha$. Recall that
\[
	\Hom^r(E_\alpha,F_\beta) =
	\bigoplus_j
	\Hom(E_\alpha^{-j},F_\beta^{-j+r}).
\]

Let $\currs^{0,q}(\Hom^r(E_\alpha,F_\beta))$ denote the sheaf over $\opens_{\alpha \beta}$ of $\Hom^r(E_\alpha,F_\beta)$-valued $(0,q)$-currents. If $f$ is a section of $\currs^{0,q}(\Hom^r(F_\beta,G_\gamma))$ and $g$ is a section of $\currs^{0,q'}(\Hom^{r'}(E_\alpha,F_\beta))$, then we can define their product as a section of $\currs^{0,q+q'}(\Hom^{r+r'}(E_\alpha,G_\gamma))$ given by
\begin{equation}
\label{eq:prod1}
	fg \defeq
	(-1)^{rq'}
	f \wedge g
\end{equation}
provided that the wedge product of $f$ and $g$ is defined. By identifying $E_\alpha$ with $\Hom(\holo_X,E_\alpha)$, where $\holo_X$ is viewed as a graded vector bundle concentrated in degree $0$, we can also define the product of sections of $\currs^{0,q}(\Hom^r(E_\alpha,F_\beta))$ and $\currs^{0,q'}(E_\alpha)$.

We will consider a sort of \v{C}ech cochains with coefficients in these sheaves. We define
\begin{equation}
\label{eq:cechCurrentHom}
	C^p(\cover,\currs^{0,q}(\Hom^r(E,F))) \defeq
	\prod_{(\alpha_0, \dots, \alpha_p)}
	\currs^{0,q}(\Hom^r(E_{\alpha_p},F_{\alpha_0}))
	({\opens_{\alpha_0 \dots \alpha_p}}),
\end{equation}
and
\begin{equation}
\label{eq:cechCurrentVec}
	C^p(\cover,\currs^{0,q}(F^r)) \defeq
	C^p(\cover,\currs^{0,q}(\Hom^r(\holo_X,F))).
\end{equation}
For an element $f \in C^p(\cover,\currs^{0,q}(\Hom^r(E,F)))$, we define its degree as $\deg f = p+q+r$,
and we call $p$ the \v{C}ech degree, $q$ the current degree, and $r$ the Hom degree.

For elements $f \in C^p(\cover,\currs^{0,q}(\Hom^r(F,G)))$ and $g \in C^{p'}(\cover,\currs^{0,q'}(\Hom^{r'}(E,F)))$ we define their product $fg \in C^{p+p'}(\cover,\currs^{0,q+q'}(\Hom^{r+r'}(E,G)))$ by
\begin{equation}
\label{eq:prod2}
	(fg)_{\alpha_0 \dots \alpha_{p+p'}} \defeq
	(-1)^{(q+r)p'}
	f_{\alpha_0 \dots \alpha_p} g_{\alpha_p \dots \alpha_{p+p'}},
\end{equation}
where the product on the right-hand side is defined by \eqref{eq:prod1} provided that it exists.

We let the $\dbar$-operator act as an operator of degree 1 on $C^\bullet(\cover,\currs^{0,\bullet}(\Hom^\bullet(E,F)))$ by
\begin{equation}
\label{eq:dbar-cech}
	(\dbar f )_{\alpha_0 \dots \alpha_p} \defeq
	(-1)^p \dbar f_{\alpha_0 \dots \alpha_p}.
\end{equation}
With this definition we have that $\dbar(fg) = (\dbar f)g + (-1)^{\deg f} f (\dbar g$), and as usual $\dbar^2 = 0$.

Next we define an operator of degree 1,
\[
	\delta: C^p(\cover,\currs^{0,q}(\Hom^r(E,F))) \to
	C^{p+1}(\cover,\currs^{0,q}(\Hom^r(E,F))),
\]
by
\[
	(\delta f)_{\alpha_0 \dots \alpha_{p+1}} \defeq
	\sum_{k=1}^p (-1)^k
	f_{\alpha_0 \dots \widehat{\alpha}_k \dots \alpha_{p+1}}
	|_{\opens_{\alpha_0 \dots \alpha_{p+1}}}.
\]
Note that $\delta$ is similar to the usual \v{C}ech coboundary, but in the sum, it is necessary to omit $f_{\alpha_1 \dots \alpha_{p+1}}$ and $f_{\alpha_0 \dots \alpha_p}$ since these do not belong to $\Hom^r(F_{\alpha_{p+1}},F_{\alpha_0})$. However, we still have that $\delta$ is a differential and an antiderivation with respect to the product \eqref{eq:prod2}, i.e., $\delta^2 = 0$, and
\begin{equation}
\label{eq:deltader}
	\delta(fg) = (\delta f) g + (-1)^{\deg f} f (\delta g).
\end{equation}

We are now ready to define the notion of a twisting cochain. We define
\[
	C^p(\cover,\Hom^r(E,F)) \defeq
	\prod_{(\alpha_0, \dots, \alpha_p)}
	\Homs^r(E_{\alpha_p},F_{\alpha_0})
	({\opens_{\alpha_0 \dots \alpha_p}}).
\]
Note that this is the subgroup of $\dbar$-closed elements of $C^p(\cover,\currs^{0,0}(\Hom^r(E,F)))$.
This subgroup is the group of \v{C}ech cochains considered in \cite{TT1}, and when we restrict to this subgroup, our sign convention is consistent with \cite{TT1}. Analogous with the situation above, we also make the definition
\[
	C^p(\cover,F^r) \defeq
	C^p(\cover,\Hom^r(\holo_X,F)).
\]
\begin{definition}
\label{def:twisting-cochain}
A \emph{twisting cochain} $a \in C^\bullet(\cover,\Hom^\bullet(F,F))$ is an element $a = \sum_{k \geq 0} a^k$ of degree 1, where $a^k \in  C^k(\cover,\Hom^{1-k}(F,F))$,
such that
\begin{equation}
\label{eq:twisting-cochain}
	\delta a + aa = 0,
\end{equation}
and $a_{\alpha \alpha}^1 = \id_{F_\alpha}$ for all $\alpha$. For simplicity we shall simply refer to the pair $(F,a)$ as a twisting cochain.
\end{definition}

Note that \eqref{eq:twisting-cochain} is equivalent to
\begin{equation}
\label{eq:twisting-cochain2}
	\delta a^{k-1} + \sum_{j=0}^k a^j a^{k-j} = 0,
	\quad k \geq 0.
\end{equation}
Explicitly, the first three equations are
\begin{align}
	&a_\alpha^0 a_\alpha^0 = 0 \label{eq:twisted0} \\
	&a_\alpha^0 a_{\alpha \beta}^1 = a_{\alpha \beta}^1 a_\beta^0 \label{eq:twisted1} \\
	&a_{\alpha \gamma}^1 - a_{\alpha \beta}^1 a_{\beta \gamma}^1 =
	a_\alpha^0 a_{\alpha \beta \gamma}^2 + a_{\alpha \beta \gamma}^2 a_\gamma^0 \label{eq:twisted2}.
\end{align}
The first equation says that $(F_\alpha,a_\alpha^0)$ is a complex, the second says that $a_{\alpha \beta}^1$ defines a chain map $(F_\beta|_{\opens_{\alpha \beta}},a_\beta^0) \to (F_\alpha|_{\opens_{\alpha \beta}},a_\alpha^0)$, and the third says that, over $\opens_{\alpha \beta \gamma}$, $a_{\alpha \gamma}^1$ and $a_{\alpha \beta}^1 a_{\beta \gamma}^1$ are chain homotopic, with the homotopy given by $a^2_{\alpha\beta\gamma}$. In particular, from the condition $a_{\alpha \alpha}^1 = \id_{F_\alpha}$, it follows that $a_{\alpha \beta}^1$ and $a_{\beta \alpha}^1$ are chain homotopy inverses to each other.

Consider two twisting cochains $(F,a)$ and $(E,b)$. We define an operator $D$ on $C^\bullet(\cover,\currs^{0,\bullet}(\Hom(E,F)))$,
\begin{equation}
\label{eq:D}
	D \varphi \defeq
	\delta \varphi + a \varphi - (-1)^{\deg \varphi} \varphi b.
\end{equation}
It can be shown that $D^2=0$, and
\[
	D(fg) = (Df)g + (-1)^{\deg f} f (Dg).
\]

We can now give a definition of a morphism between twisting cochains. This notion was first introduced in \cite{Gil}*{Definition~3.11}.
\begin{definition}
Let $(F,a)$ and $(E,b)$ be twisting cochains. A \emph{morphism of twisting cochains} $\varphi: (E,b) \to (F,a)$ is a $D$-closed element $\varphi \in C^\bullet(\cover,\Hom(E,F))$ of degree 0.
\end{definition}
The twisting cochains that are of main interest to us arise as so-called \emph{twisted resolutions} of coherent $\holo_X$-modules, see \cites{TT1,OTT}, or more generally complexes of coherent $\holo_X$-modules, see \cite{OTT2}.
\begin{definition}
Let $\sheafF^\bullet$ be a bounded complex of coherent $\holo_X$-modules. A \emph{twisted resolution} of $\sheafF^\bullet$ consists of a twisting cochain $(F,a)$ such that for each $\alpha$ there is a quasi-isomorphism $F_\alpha^\bullet \to \sheafF^\bullet|_{\opens_\alpha}$.
\end{definition}
Let $\sheafF^\bullet$ be a bounded complex of $\holo_X$-modules. It is well known that there exists a cover by Stein open sets such that for each $\opens_\alpha$ there exists a complex $(F_\alpha^\bullet,a_\alpha^0)$ that is quasi-isomorphic with $\sheafF^\bullet|_{\opens_\alpha}$, see, e.g., the method outlined in \cite{Eis}*{Exercise~3.53}. It was shown in \cite{OTT2}*{Proposition~1.2.3} that $a^0$ can be extended to a twisted resolution of $\sheafF^\bullet$.

We combine $D$ and $\dbar$ into an operator
\[
	\nabla = D - \dbar
\]
of degree 1 on $C^\bullet(\cover,\currs^{0,\bullet}(\Hom^\bullet(E,F)))$. It can be shown that $\dbar D = -D \dbar$, and from this it follows that $\nabla^2 = 0$. Moreover, we have that
\begin{equation}
\label{eq:nablaDerivation}
	\nabla(fg) = (\nabla f) g + (-1)^{\deg f} f (\nabla g).
\end{equation}

We end this section by introducing some additional notation that will be useful in the sequel. Let $\cover' = (\opens'_\alpha)$ be such that $\opens'_\alpha \subseteq \opens_\alpha$ for each $\alpha$. We have a homomorphism
\begin{align*}
	C^\bullet(\cover,\currs^{0,\bullet}(\Hom(E,F))) &\to
	C^\bullet(\cover',\currs^{0,\bullet}(\Hom(E,F))) \\
	A &\mapsto A|_{\cover'}
\end{align*}
induced by the restriction maps. We have that this homomorphism commutes with $\dbar$, $D$, and $\nabla$, as well as the multiplication \eqref{eq:prod2}.

Let $f \in C^p(\cover,\currs^{0,q}(\Hom^r(E,F)))$.  We denote by $f_k^\ell$ the part of $f$ that belong to
\[
	\prod_{(\alpha_0, \dots, \alpha_p)}
	\currs^{0,q}(\Hom(E_{\alpha_p}^{-\ell},F_{\alpha_0}^{-k}))
	(\opens_{\alpha_0 \dots \alpha_p}),
\]
i.e., when using both a superscript and a subscript, we pick out morphisms between bundles
in certain degrees. We will also use the notation
\[
	f^\ell_\bullet \defeq \sum_k f^\ell_k.
\]
We say that \emph{$f$ takes values in $\Hom(F^{-\ell},F^{-k})$} if $f=f^\ell_k$,
and that \emph{$f$ takes values in $\Hom(F^{-\ell},F)$} if $f=f^\ell_\bullet$.
\subsection{Residues of cochains with coefficients in $ASM(X)$}
For elements in \eqref{eq:cechCurrentHom}, we will say that they are almost semimeromorphic and pseudomeromorphic respectively if their components are. We will now extend the notion of residues to almost semimeromorphic elements of $C^\bullet(\cover,\currs^{0,\bullet}(\Hom(E,F)))$. 

For each $\alpha$ let $Z_\alpha$ be a subvariety of $\opens_\alpha$. Let $\opens'_\alpha \defeq \opens_\alpha \setminus Z_\alpha$, and define the family of open sets $\cover' \defeq (\opens'_\alpha)$. Let $T \in C^\bullet(\cover,\currs^{0,\bullet}(\Hom(E,F)))$ be pseudomeromorphic. We define $\mathbf{1}_{\cover'} T$ by
\[
	(\mathbf{1}_{\cover'} T)_{\alpha_0 \dots \alpha_p} =
	\mathbf{1}_{\opens'_{\alpha_0 \dots \alpha_p}} T_{\alpha_0 \dots \alpha_p}.
\]
Suppose $u \in C^\bullet(\cover',\currs^{0,\bullet}(\Hom(E,F)))$ is smooth. For each $(\alpha_0,\dots,\alpha_p)$ let $s_{\alpha_0 \dots \alpha_p} \neq 0$ be a section of a vector bundle over $\opens_{\alpha_0 \dots \alpha_p}$ whose zero locus contains $Z_{\alpha_0} \cup \dots \cup Z_{\alpha_p}$. If $u$ has an extension as an almost semimeromorphic current $U \in C^\bullet(\cover,\currs^{0,\bullet}(\Hom(E,F)))$, then, as in \eqref{eq:asmExtension} it is given by
\[
	U_{\alpha_0 \dots \alpha_p} \defeq
	\lim_{\epsilon \to 0}
	\chi(|s_{\alpha_0 \dots \alpha_p}|^2/\epsilon)
	u_{\alpha_0 \dots \alpha_p}.
\]

For an almost semimeromorphic current $A \in C^\bullet(\cover,\currs^{0,\bullet}(\Hom(E,F)))$, in view of \eqref{eq:dbar-cech}, we define its residue $R(A)$ by
\[
	R(A)_{\alpha_0 \dots \alpha_p} \defeq (-1)^p R(A_{\alpha_0 \dots \alpha_p}).
\]
For each $\alpha$, suppose that there exists a subvariety $Z_\alpha$ such that $A_{\alpha_0 \dots \alpha_p}$ is smooth outside of $Z_{\alpha_0} \cup \dots \cup Z_{\alpha_p}$. If $\cover' = (\opens_\alpha \setminus Z_\alpha)$, then we have that
\[
	R(A) = \dbar A - \mathbf{1}_{\cover'} \dbar A,
\]
in analogy with \eqref{eq:residue-def}. We also have the following useful formula which is analogous to (2.4) in \cite{Lar}. Suppose that $B \in C^\bullet(\cover,\currs^{0,\bullet}(\Hom(E,F)))$ is almost semimeromorphic, and that $\nabla (A|_{\cover'}) = B|_{\cover'}$. Then
\begin{equation}
\label{eq:nabla-A-B}
	R(A) = B - \nabla A.
\end{equation}
Indeed, we have that $\dbar (A|_{\cover'}) = (DA - B)|_{\cover'}$. Since $DA$ is almost semimeromorphic and therefore has the SEP, it follows that $\mathbf{1}_{\cover'} \dbar A = DA - B$. Then
\[
	R(A) =
	\dbar A - \mathbf{1}_{\cover'} \dbar A =
	\dbar A - DA + B = B - \nabla A.
\]
\section{A residue current associated with a twisting cochain}
\label{section:residue}
We will now give the construction of a residue current associated with an arbitrary twisting cochain. Throughout this section, let $(F,a)$ be a twisting cochain on a complex manifold $X$. We will tacitly assume that the bundles $F_\alpha^k$ are equipped with Hermitian metrics. As usual we will denote the cover of $X$ by $\cover \defeq (\opens_\alpha)$.

For each $\alpha$ let $Z_\alpha$ denote the subvariety of $\opens_\alpha$ where $a_\alpha^0$ does not have optimal rank. Let $\opens'_\alpha \defeq \opens_\alpha \setminus Z_\alpha$, and define the cover $\cover' \defeq (\opens'_\alpha)$.
We define an element $\sigma^0 \in C^0(\cover',\currs^{0,0}(\Hom^{-1}(F,F)))$ in the following way.
Let $\sigma_\alpha^0$ be the minimal right-inverse of $a_\alpha^0$ on $\opens'_\alpha$, i.e., the Moore--Penrose inverse. It satisfies the properties $a_\alpha^0 \sigma_\alpha^0 a_\alpha^0 = a_\alpha^0$, $\sigma_\alpha^0|_{(\im a_\alpha^0)^\perp}= 0$, and $\im \sigma_\alpha^0 \perp \ker a_\alpha^0$. From the last two properties it follows that $(\sigma^0)^2 = 0$.
We write
\[
	a = a^0 + a' \quad \text{where} \quad a' \defeq \sum_{k\geq 1} a^k.
\]
Define
\begin{equation} \label{eq:sigmadef}
    \sigma \defeq
    \sigma^0(\id+a' \sigma^0)^{-1} =
    \sigma^0 - \sigma^0 a' \sigma^0 +
    \sigma^0 a' \sigma^0 a' \sigma^0 - \dots.
\end{equation}
Note that this is a well-defined element of degree $-1$ since over each $\opens_{\alpha_0 \dots \alpha_p}$, the right-hand is finite since $a'\sigma^0$ has negative Hom degree and the complexes have finite length.
Since $(\sigma^0)^2 = 0$, it follows that $\sigma^2 = 0$.

\begin{propdef}
\label{propdef:U-Rdef}
Define
\begin{equation}
\label{eq:udef}
    u \defeq \sigma(\id-\dbar\sigma)^{-1} =
    \sigma+\sigma\dbar\sigma+\sigma(\dbar\sigma)^2 + \dots,
\end{equation}
which has degree $-1$. Then $u$ has an almost semimeromorphic extension
\[
	U \in \bigoplus_{p+q+r=-1}C^p(\cover,\currs^{0,q}(\Hom^r(F,F))).
\]
Moreover, we define
\begin{equation} \label{eq:Rdef}
	R \defeq \id - \nabla U,
\end{equation}
which has degree $0$, is pseudomeromorphic and $\nabla$-closed, i.e., $\nabla R = 0$. We also have the decomposition
\begin{equation}
\label{eq:Rcomp}
	R = R' + R(U),
\end{equation}
where $R'$ is almost semimeromorphic and $R'|_{\cover'}$ is smooth. If each complex $F_\alpha^\bullet$ is generically exact, then $R' = 0$. For the residue part we have that 
\begin{equation}
\label{eq:Rsupp}
	R(U)|_{\cover'} = 0.
\end{equation}
\end{propdef}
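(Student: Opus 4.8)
The plan is to follow the construction of \cite{JL}, observing that generic exactness of the complexes $F_\alpha^\bullet$ enters only in the vanishing of $R'$. First a degree count: $\sigma$ has degree $-1$, so $\dbar\sigma$ has degree $0$ and current degree at least $1$; since $(0,q)$-currents vanish for $q>n$, the series in \eqref{eq:udef} terminates, $u$ is a finite sum of degree $-1$, and $u$ is smooth on $\cover'$ because $\sigma$ is. It is convenient to record the recursion $u_1=\sigma$ and $u_{k+1}=\sigma\dbar u_k$, so that $u=\sum_k u_k$ with $u_k=\sigma(\dbar\sigma)^{k-1}$ and $\dbar u_k=(\dbar\sigma)^k$; here one uses $\sigma^2=0$ and $\dbar^2=0$.

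The essential point is the almost semimeromorphic extension of $u$. First, the minimal inverse $\sigma_\alpha^0$ of the Hermitian bundle morphism $a_\alpha^0$, which is smooth on $\opens'_\alpha$, extends to an almost semimeromorphic current on $\opens_\alpha$; this is proved in \cite{AW1} (see also \cite{AW3}) by resolution of singularities, and uses nothing about exactness. Since $\sigma=\sigma^0(\id+a'\sigma^0)^{-1}$ is a finite sum of products of the $\sigma_\alpha^0$ with the holomorphic forms $a^k$, and $ASM$ currents form an algebra over smooth forms, $\sigma$ extends to an $ASM$ element, still smooth on $\cover'$. One then argues by induction on $k$: if $u_k$ is $ASM$ and smooth on $\cover'$, then $\mathbf{1}_{\cover'}\dbar u_k$ is $ASM$ by \cite{AW3}*{Proposition~4.16}, and $\dbar u_k=\mathbf{1}_{\cover'}\dbar u_k+R(u_k)$ with $R(u_k)$ supported on the $Z_{\alpha_i}$; one checks that $\sigma\,\mathbf{1}_{\cover'}\dbar u_k$ is again $ASM$, and since it coincides with $u_{k+1}$ on $\cover'$ and $ASM$ currents have the SEP, it is the desired extension of $u_{k+1}$. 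Summing over $k$ produces $U$. I expect this inductive step---specifically, giving a meaning to the products of $ASM$ currents that occur, carried out as in \cite{JL} by passing to a common modification on which $\sigma$ becomes semimeromorphic---to be the main obstacle.

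Granting $U$, the remaining claims are essentially formal. We have $DU=\delta U+aU+Ua$ (note $\deg U=-1$), which is $ASM$ because $\delta$ and multiplication by the holomorphic $a^k$ preserve the class, while $\dbar U$ is pseudomeromorphic; hence $R=\id-\nabla U=\id-DU+\dbar U$ is pseudomeromorphic of degree $0$. It is $\nabla$-closed since $\nabla R=\nabla\id=D\id=\delta\id=0$, the last equality because $\id$ has \v{C}ech degree $0$ and the modified $\delta$ then gives the empty sum (and $\dbar\id=0$). For the decomposition, the identity $\dbar U=\mathbf{1}_{\cover'}\dbar U+R(U)$ is precisely the defining equation for the cochain $R(U)$ with respect to the $Z_\alpha$ (recall $U_{\alpha_0\dots\alpha_p}$ is smooth off $Z_{\alpha_0}\cup\dots\cup Z_{\alpha_p}$), so $R=R'+R(U)$ with $R'\defeq\id-DU+\mathbf{1}_{\cover'}\dbar U$, and $R'$ is $ASM$ because $\mathbf{1}_{\cover'}\dbar U$ is, again by \cite{AW3}*{Proposition~4.16}. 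Restricting to $\cover'$, where $U=u$ is smooth and $\mathbf{1}_{\cover'}$ acts as the identity, gives $R'|_{\cover'}=\id-\nabla u$, which is smooth; and $R(U)|_{\cover'}=0$ since $\supp R(U_{\alpha_0\dots\alpha_p})\subseteq Z_{\alpha_0}\cup\dots\cup Z_{\alpha_p}$ by \eqref{eq:resSupport}, a set removed on passing to $\cover'$.

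Finally, suppose each $F_\alpha^\bullet$ is generically exact. As $R'$ is $ASM$, hence has the SEP, and $\cover'$ arises by removing subvarieties of positive codimension, by \eqref{eq:asmExtension} it suffices to show $R'|_{\cover'}=0$, i.e.\ $\nabla u=\id$ on $\cover'$. Using $\nabla\dbar=-\dbar\nabla$, the recursion telescopes: if $\nabla\sigma=\id-\dbar\sigma$ on $\cover'$ then $\nabla(\sigma(\dbar\sigma)^{k-1})=(\id-\dbar\sigma)(\dbar\sigma)^{k-1}=(\dbar\sigma)^{k-1}-(\dbar\sigma)^k$, and summing over $k\geq 1$ gives $\nabla u=\id$. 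So it remains to check $\nabla\sigma=\id-\dbar\sigma$, equivalently $D\sigma=\id$, on $\cover'$. Generic exactness forces $F_\alpha^\bullet$ to be pointwise exact on $\opens'_\alpha=\opens_\alpha\setminus Z_\alpha$, so the minimal inverse obeys the homotopy identity $a^0\sigma^0+\sigma^0 a^0=\id$ there; feeding this, together with the twisting-cochain relations \eqref{eq:twisting-cochain2}, into $D\sigma=\delta\sigma+a\sigma+\sigma a$ and expanding $\sigma=\sigma^0(\id+a'\sigma^0)^{-1}$ yields $D\sigma=\id$ on $\cover'$, by the same sign-sensitive but routine computation as in \cite{JL}. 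Hence $R'|_{\cover'}=0$ and therefore $R'=0$.
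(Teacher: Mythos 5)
Your proposal is correct and follows essentially the same route as the paper, which simply defers the almost semimeromorphic extension, the decomposition $R=R'+R(U)$ with $R'=\id-DU+\mathbf{1}_{\cover'}\dbar U$, and the vanishing of $R'$ in the generically exact case to the corresponding arguments in \cite{JL} (Proposition-Definition~4.1, Proposition~4.4, Lemmas~4.5--4.6); you reconstruct exactly those arguments, using \cite{AW3}*{Proposition~4.16}, the SEP, and the pointwise homotopy $a^0\sigma^0+\sigma^0a^0=\id$ on $\cover'$. The one step you leave to \cite{JL}, namely $D\sigma=\id$ on $\cover'$, does check out (using $Da'=a'a'$ from the Maurer--Cartan equation and $\sigma=\sigma^0(\id+a'\sigma^0)^{-1}=(\id+\sigma^0a')^{-1}\sigma^0$), and your telescoping then gives $\nabla u=\id$ on $\cover'$ as needed.
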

\begin{proof}
The existence of an almost semimeromorphic extension follows by the same arguments as in \cite{JL}*{Proposition-Definition 4.1}, and the decomposition \eqref{eq:Rcomp} follows using the same argument as in the first part of \cite{JL}*{Proposition~4.4}.

If each complex $F_\alpha^\bullet$ is generically exact, then $a^0 \sigma^0 + \sigma^0 a^0 = \id$, and we get that $R' = 0$ by the arguments in Lemma~4.5 and Lemma~4.6 in \cite{JL}.
\end{proof}

We shall refer to $R$ as the residue current associated with the twisting cochain $(F,a)$. It is related to the exactness of the complex $\left( \bigoplus_{p+r=\bullet} C^p(\cover,F^r), D \right)$ in the following way. This result generalizes \cite{AW1}*{Proposition~2.3}, see Remark~\ref{rmk:aw}.
\begin{proposition}
\label{prop:twisted-duality}
Let $\phi$ be an element in $\bigoplus_{p+r=k} C^p(\cover,F^r)$.
\begin{enumerate}[(i)]
\item
If $D \phi = 0$ and $R \phi = 0$, then there exists an element $\psi \in \bigoplus_{p+r=k-1} C^p(\cover,F^r)$ such that $D \psi = \phi$.
\item
Conversely, if $\phi = D \psi$ for some $\psi \in \bigoplus_{p+r=k-1} C^p(\cover,F^r)$ and $R_\bullet^\ell = 0$ for $\ell > -k$, then $R \phi = 0$.
\end{enumerate}
\end{proposition}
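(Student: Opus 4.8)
The plan is to mimic the proof of the duality principle in \cite{AW1}*{Proposition~2.3}, working now in the total complex $\bigl(\bigoplus_{p+r=\bullet} C^p(\cover,F^r), D\bigr)$ and using the defining relation $R = \id - \nabla U$ together with the decomposition $R = R' + R(U)$.

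For part (i), suppose $D\phi = 0$ and $R\phi = 0$. Apply the identity $\id = R + \nabla U = R + DU - \dbar U$ to $\phi$. Since $\phi$ is a cochain with coefficients in holomorphic sections (current degree $0$ and $\dbar$-closed), and since $U$ has current degree summing to $-1$ in $\bigoplus_{p+q+r=-1}$, the term $\dbar(U\phi)$ has current degree $\geq 1$; on the other hand $\phi$ has current degree $0$. So I would split $U\phi$ by current degree and isolate the current-degree-zero part. Writing $U\phi = \sum_q (U\phi)_q$, the relation $\phi = R\phi + D(U\phi) - \dbar(U\phi)$ forces, upon extracting the current-degree-zero component (using that $D$ preserves current degree and $\dbar$ raises it by one), $\phi = (R\phi)_0 + D((U\phi)_0) - (\dbar \text{-correction from current degree } 1)$. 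Because $R\phi = 0$ by hypothesis, and because the $\dbar$-correction is cancelled inductively (run the argument degree by degree in $q$, using $\dbar U$ nilpotent in the expansion \eqref{eq:udef} and the SEP of the almost semimeromorphic pieces), one should arrive at $\phi = D\psi$ with $\psi = (U\phi)_0 \in \bigoplus_{p+r=k-1} C^p(\cover,F^r)$. The point where I expect to need care is that $U\phi$ a priori only makes sense as a current-valued cochain, so one must check that its current-degree-zero part is actually represented by holomorphic sections: this uses $\dbar\bigl((U\phi)_0\bigr) = \bigl(\dbar(U\phi)\bigr)_0 = $ something forced to vanish by the degree bookkeeping, hence $(U\phi)_0$ is $\dbar$-closed of current degree $0$, i.e.\ holomorphic, so it lies in $\bigoplus_{p+r=k-1}C^p(\cover,F^r)$ as required.

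For part (ii), suppose $\phi = D\psi$. Then $R\phi = R D\psi$. Since $\nabla R = 0$ and $\nabla = D - \dbar$ with $\dbar R$ raising current degree, one has $DR = \dbar R$ on the relevant components; more directly, $R\phi = RD\psi$, and I would use that $R$ takes values in various $\Hom(F^{-\ell},F)$-pieces: write $R = \sum_\ell R^\ell_\bullet$. Because $\psi \in \bigoplus_{p+r=k-1}C^p(\cover,F^r)$ takes values (after identifying $F = \Hom(\holo_X,F)$) only in bundles $F^r$ with $-r$ in the appropriate range, and $D\psi$ lands in $F$-valued cochains of total degree $k$, the product $R\phi = RD\psi$ only sees the components $R^\ell_\bullet$ with $\ell$ in a specific range; the hypothesis $R^\ell_\bullet = 0$ for $\ell > -k$ is exactly what kills the potentially nonzero contributions. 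The computation $RD\psi = (\nabla R)\psi + (\text{sign}) R\dbar\psi \pm \dbar(R\psi) \mp \nabla(R\psi)$-type manipulation, using $\nabla R = 0$, $\dbar\psi = 0$, and the Leibniz rule \eqref{eq:nablaDerivation}, shows $R\phi = \pm\,\nabla(R\psi) \pm \dbar(\cdots)$, which has current degree $\geq 1$; but $R\phi$, being a product of $R$ with a holomorphic cochain and then landing in the duality pairing, should be forced to have current degree $0$ on the component that matters, hence vanishes. The vanishing-degree constraint combined with $R^\ell_\bullet = 0$ for $\ell > -k$ is what does it.

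The main obstacle I anticipate is the bookkeeping of the three gradings simultaneously — \v{C}ech degree $p$, current degree $q$, and Hom degree $r$ — and making sure the sign conventions from \eqref{eq:prod1}, \eqref{eq:prod2}, \eqref{eq:dbar-cech}, \eqref{eq:D} are applied consistently when one multiplies $R$ or $U$ against $\phi$ and moves $\nabla$ past the product via \eqref{eq:nablaDerivation}. In particular, in (ii) the precise hypothesis on which $R^\ell_\bullet$ must vanish comes out of matching the Hom degree of $R$ with the Hom degree of $\phi$ so that the product has total current degree $0$, and getting the inequality $\ell > -k$ right (rather than $\ell \geq -k$ or $\ell > -k-1$) requires carefully tracking that $\phi$ sits in total degree $k$ with current degree $0$. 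A secondary subtlety is that $U$, hence $U\phi$, is only almost semimeromorphic, so every manipulation that treats $(U\phi)_0$ as an honest holomorphic cochain must be justified via the SEP and the residue formalism of Section~\ref{section:pseudo}; I would handle this exactly as in the passage around \eqref{eq:nabla-A-B}.
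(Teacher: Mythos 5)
Part (i) of your proposal has a genuine gap. From $R=\id-\nabla U$ and $\nabla\phi=D\phi=0$ one indeed gets $\nabla(U\phi)=(\nabla U)\phi=\phi-R\phi=\phi$, and splitting $v\defeq U\phi$ by current degree this says $Dv^0=\phi$ and $Dv^\ell=\dbar v^{\ell-1}$ for $\ell\geq 1$. But your claim that $(U\phi)_0=v^0$ is automatically $\dbar$-closed ``by degree bookkeeping'' is false: $\dbar v^0=Dv^1$, which has no reason to vanish, so $v^0$ is in general not holomorphic and cannot by itself serve as $\psi$. The missing idea — and the reason the cover is assumed Stein — is that one must solve a descending sequence of $\dbar$-equations $\dbar w^\ell=v^{\ell+1}+Dw^{\ell+1}$ on the (Stein) intersections and then take $\psi=v^0+Dw^0$, which is $\dbar$-closed, hence lies in $\bigoplus_{p+r=k-1}C^p(\cover,F^r)$, and satisfies $D\psi=\phi$. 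Your proposed ``inductive cancellation'' using the nilpotence of $\dbar\sigma$ and the SEP does not substitute for this; without solvability of these $\dbar$-equations one only gets local solutions (cf.\ Remark~\ref{rmk:aw}).

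Part (ii) contains the right ingredients ($\nabla R=0$, the Leibniz rule, $\dbar\psi=0$) but the concluding mechanism is wrong. It is not true that $R\phi$ ``must have current degree $0$'' — $R$ is a current and $R\phi$ carries positive current degrees in general — so no vanishing can be extracted from a current-degree mismatch. The correct argument is short: $R\phi=R\nabla\psi=\nabla(R\psi)$ by $\nabla R=0$ and \eqref{eq:nablaDerivation}, and since every component of $\psi$ takes values in $F^{-\ell}$ with $\ell\geq 1-k>-k$, the hypothesis $R^\ell_\bullet=0$ for $\ell>-k$ gives $R\psi=0$ outright, hence $R\phi=0$. Note also that applying the Hom-degree hypothesis directly to $\phi$ (as your phrase ``$R\phi=RD\psi$ only sees the components $R^\ell_\bullet$ with $\ell$ in a specific range'' suggests) does not suffice: $\phi$ has a component with $\ell=-k$, which the hypothesis does not cover, so one really must pass to $\psi$ via $\nabla R=0$.
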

\begin{proof}
(i) Assume that $D \phi = \nabla \phi = 0$ and $R \phi = 0$. Then we have that
\[
	\nabla (U \phi) =
	(\nabla U) \phi - U (\nabla \phi) =
	\phi - R \phi = \phi,
\]
where the second equality follows by \eqref{eq:Rdef}. Thus $v = U \phi \in \bigoplus_{p+q+r=k-1} C^p(\cover,\currs^{0,q}(F^r))$ is a solution to the system of equations,
\[
	D v^0 = \phi, \quad
	D v^\ell = \dbar v^{\ell-1}, \quad
	\ell \geq 1,
\]
where $v^q$ denotes the part of $v$ of bidegree $(0,q)$. Since $\cover$ is Stein there exists an element $w \in \bigoplus_{p+q+r=k-2} C^p(\cover,\currs^{0,q}(F^r))$ that satisfies the system $\dbar$-equations
\[
	\dbar w^\ell = v^{\ell+1} + D w^{\ell+1},
\]
where $w^q$ denotes the part of $w$ of bidegree $(0,q)$.
Let $\psi = v^0 + D w^0$. Then $\psi$ is an element of $\bigoplus_{p+r=k-1} C^p(\cover,F^r)$ since $\dbar \psi = 0$, and, moreover, we have that $D \psi = \phi$.

(ii) Since $\nabla R = 0$, we have that $R \phi = R\nabla \psi = \nabla(R \psi)$. By the assumption on $R$ and the fact that the components of $\psi$ take values in $F^{-\ell}$, $\ell > -k$ it follows that $R \psi = 0$. From this the statement follows.
\end{proof}
\begin{remark}
\label{rmk:aw}
The currents that we construct in this paper are generalizations of the currents $U$ and $R$ constructed in \cite{AW1}. Let $X$ be a complex manifold that is not necessarily Stein. Suppose that we have a complex of the form \eqref{eq:bundle-complex}. This defines a twisting cochain in the following way. We take $\cover$ to be the cover consisting of the single open set $\opens_\alpha \defeq X$. We take $a^k = 0$ for $k \geq 1$, and we define $a^0 = a_\alpha^0$ to be the differential of the complex \eqref{eq:bundle-complex}. Then the complex \eqref{eq:bundle-complex} together with $a$ defines a twisting cochain.

In this case we get that $\sigma = \sigma_\alpha^0$ is the minimal right-inverse of $a_\alpha^0$, and the current $U$ coincides with the definition given in \cite{AW1}. Since the complex is generically exact we have that $R' = 0$, and hence $R = R(U)$ which coincides with the definition in \cite{AW1}.

In this setting Proposition~\ref{prop:twisted-duality} becomes precisely the statement of \cite{AW1}*{Proposition~2.3}: Let $\phi$ be a holomorphic section of $F^k$. If $a \phi = 0$ and $R \phi = 0$, then locally there exists a holomorphic section $\psi$ of $F^{k-1}$ such that $a \psi = \phi$. Note that we only have existence of local solution since we have made no assumptions on $X$ and the proof of Proposition~\ref{prop:twisted-duality} requires us to solve a sequence of $\dbar$-equations. Conversely, if $\phi = a \psi$ for some holomorphic section $\psi$ of $F^{k-1}$ and $R_\bullet^\ell = 0$ for $\ell > -k$, then $R \phi = 0$.

In addition to that we in this paper consider currents associated with families of locally free resolutions over a cover, we also make the generalization that we do not require the complexes involved to be generically exact. This introduces the additional term $R'$ that is not present in \cite{AW1}. The main reason for this is that we want to define residue currents associated with resolutions of chain complexes, and the locally free resolutions involved in this twisting cochain are in general not generically exact.
\end{remark}
We shall now give a more thorough description of the residue part of the current $R$. This will be done under the assumption that $(F,a)$ has what we shall refer to as \emph{singularity subvarieties}. For each $\alpha$, let $Z_\alpha^k \subseteq \opens_\alpha$ be the subvariety where $F_\alpha^{-k} \to F_\alpha^{-k+1}$ does not have optimal rank. If $Z^k \defeq \bigcup_\alpha Z_\alpha^k$ is a subvariety of $X$, then we say that $(F,a)$ has singularity subvarieties $Z^k$.
If $(F,a)$ has singularity subvarieties, then we have the following result on which parts of the residue part of $R$ that vanishes. This is analogous to \cite{Lar}*{Lemma~3.5} which in turn is a generalization of \cite{AW1}*{Theorem~3.1}.
\begin{proposition}
\label{prop:R-vanish}
Let $(F,a)$ be a twisting cochain, and let $U$ be the current as defined in Section~\ref{section:residue}. Suppose that $(F,a)$ has singularity subvarieties $Z^k$ which satisfy $\codim Z^{\ell+m} \geq m + 1$ for $m = 1, \dots, k - \ell$. Then $R(U)_k^\ell = 0$.
\end{proposition}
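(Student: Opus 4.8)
The plan is to mirror the proof of \cite{Lar}*{Lemma~3.5} — itself a refinement of \cite{AW1}*{Theorem~3.1} — reducing the vanishing to the dimension principle (Proposition~\ref{prop:dimPrinciple}) after bookkeeping of \v{C}ech degree, current degree and support. First I would decompose $R(U)_k^\ell$ by \v{C}ech degree: since it takes values in $\Hom^{\ell-k}(F,F)$ and $R=\id-\nabla U$ has total degree $0$, the summand of $R(U)_k^\ell$ lying in $C^p(\cover,\currs^{0,q}(\Hom^{\ell-k}(F,F)))$ automatically has current degree $q=k-\ell-p$. Moreover $R(U)$ is a residue, so by the second expression in \eqref{eq:residue} — or directly from $R(U)=\dbar U-\mathbf{1}_{\cover'}\dbar U$ together with the fact that $U$ has current degrees $\ge 0$ — it has no component of current degree $0$; hence it suffices to show that, for each $p$ with $0\le p\le k-\ell-1$ (so $1\le q=k-\ell-p\le k-\ell$), the corresponding summand vanishes.

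The heart of the matter is the support statement: that summand, of current degree $q$, has support contained in $Z^{\ell+q}$. Over a chart $\opens_{\alpha_0\cdots\alpha_p}$ the corresponding component of $U$ is, by \eqref{eq:sigmadef} and \eqref{eq:udef}, a finite sum of products of blocks of the minimal inverses $\sigma^0_{\alpha_i}$ interleaved with the cochain maps $a^j$ and with $\dbar$'s, and such a product carries only the singularities — and, after applying $R(\cdot)$, only the residues — of the $\sigma^0$-blocks in it. I would establish the claim by the same kind of induction as in \cite{Lar}*{Lemma~3.5}: passing to $X\setminus Z^{\ell+1}$, where the leading $\sigma^0$-block contributes no residue, \eqref{eq:residueSmooth} and the standard extension property of almost semimeromorphic currents show that the restriction of $R(U)_k^\ell$ is governed by the current of a complex one step shorter at the bottom, whose analogue of $R(U)_k^{\ell+1}$ vanishes by the inductive hypothesis (its codimension hypothesis being implied by ours); this places the support on $Z^{\ell+1}$, and iterating the comparison, now using the bounds on $Z^{\ell+2},\dots,Z^{\ell+q}$, reduces it to $Z^{\ell+q}$. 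Carrying out this support bookkeeping faithfully in the twisted setting — keeping track of the interleaved $a^j$ and the \v{C}ech degree, and making precise the statement that the residue of the minimal inverse only sees the incoming rank drop — is the analogue of the key step in the proofs of \cite{AW1}*{Theorem~3.1} and \cite{Lar}*{Lemma~3.5}, and I expect it to be the main obstacle; the \v{C}ech degree is the only new feature, and it only helps, since it trades current degree for \v{C}ech degree.

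Finally, with the support in $Z^{\ell+q}$, the hypothesis with $m=q$ (admissible since $1\le q\le k-\ell$) gives $\codim Z^{\ell+q}\ge q+1>q$, so by the dimension principle this pseudomeromorphic $(*,q)$-current, supported on $Z^{\ell+q}$, is zero. Summing over the admissible $p$ gives $R(U)_k^\ell=0$.
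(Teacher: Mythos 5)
Your global strategy---decompose into pieces of fixed current degree, pull smooth factors out of residues via \eqref{eq:residueSmooth}, and finish with the dimension principle---is indeed how the paper argues, and your degree bookkeeping ($p+q=k-\ell$, no $(0,0)$-component of $R(U)$) is correct. The gap is in the central support claim and the induction you propose for it. You assert that the $(0,q)$-part of $R(U)_k^\ell$ is supported in the single variety $Z^{\ell+q}$, to be proved by peeling the bottom $\sigma^0$-block off outside $Z^{\ell+1}$ and invoking ``the current of a complex one step shorter at the bottom''. In the twisted setting that inductive object does not exist: since $a^0$ lowers the level and $a^{k}$, $k\geq 2$, raise it, the relation $\delta a + aa=0$ in levels $\geq \ell+1$ involves compositions passing through the discarded level $\ell$ (for instance $(a^2)_{\ell+1}^{\ell}(a^0)_{\ell}^{\ell+1}$), so the bottom truncation of $(F,a)$ is not a twisting cochain and there is no associated $U$ or $R$ to which an inductive hypothesis could be applied. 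Moreover, by \eqref{eq:sigmadef} a single component $\sigma_j^\ell$ (hence each component of $u$) contains $\sigma^0$-blocks at every level between $\ell+1$ and $j$, interleaved with $a'$, and is therefore singular along all of $Z^{\ell+1},\dots,Z^{j}$, not only along $Z^{\ell+1}$. Consequently, after your first restriction the factor you would next want to remove sits in the middle of the product, flanked by non-smooth factors, so \eqref{eq:residueSmooth} gives nothing; and since the $Z^m$ of a general twisting cochain are not nested (that is special to resolutions), the rightmost block need not be smooth on $X\setminus Z^{\ell+2}$ either. The ``iteration'' that is supposed to push the support from $Z^{\ell+1}$ down to $Z^{\ell+q}$ thus has no mechanism, and the single-variety support statement is in any case stronger than what such an induction would deliver.

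The paper avoids any support localization to a single $Z^m$. Using $\sigma^2=0$ one replaces $\sigma(\dbar\sigma)^q$ by $(\dbar\sigma)^q\sigma$ and proves $R\left((\dbar\sigma)^q\sigma\right)_k^\ell=0$ by induction over the \emph{top} index $k$: decomposing over the intermediate level $j$, the leftmost factor $(\dbar\sigma)_k^j$ is smooth outside the union $\bigcup_{j+1\leq m\leq k}Z^m$ and can be pulled out there by \eqref{eq:residueSmooth}, and the inductively established \emph{vanishing} of $R\left((\dbar\sigma)^{q-1}\sigma\right)_j^\ell$ (vanishing, not a support statement) places the support of each term on that union; the hypotheses give $\codim \geq j-\ell+2$ while the term has bidegree at most $(0,j-\ell+1)$, so Proposition~\ref{prop:dimPrinciple} applies. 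This trade-off between the codimension of a union of singularity sets and the bidegree of the corresponding term is what replaces your single-variety claim; if you rephrase your induction so that the inducted statement is the vanishing of $R\left((\dbar\sigma)^q\sigma\right)_j^\ell$ for $j<k$ and the smooth factor is peeled from the top rather than the bottom, your outline becomes the paper's proof.
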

\begin{proof}
Since $\sigma$ has negative Hom degree, we need to prove that $R \left(\sigma (\dbar \sigma)^q \right)_k^\ell = 0$ for $q = 0, \dots, k-\ell-1$. By applying $\dbar$ to the equality $\sigma \sigma = 0$ it follows that $\sigma (\dbar \sigma)^q = (\dbar \sigma)^q \sigma$, and we may thus equivalently prove that
\begin{equation}
\label{eq:Rq}
	R \left( (\dbar \sigma)^q \sigma \right)_k^\ell = 0.
\end{equation}
We claim that $\eqref{eq:Rq}$ holds for $q = 0$. Indeed, $\sigma$ is smooth outside $\bigcup_{j+1 \leq m \leq k} Z^m$, and hence $R(\sigma)_k^\ell$ has support on this set. Since this set has codimension at least 2 and $R(\sigma)_k^\ell$ has bidegree $(0,1)$, we have that $R(\sigma)_k^\ell = 0$ by the dimension principle.

For $q > 0$ we will prove \eqref{eq:Rq} by induction over $k$.
To this end, assume that $R \left( (\dbar \sigma)^q \sigma \right)_j^\ell = 0$ for $j = \ell+1, \dots, k-1$ and $q = 0, \dots, j-\ell-1$.
We have that
\[
	R \left(
	(\dbar \sigma)^q \sigma
	\right)_k^\ell =
	\sum_{j=\ell+q}^{k-1}
	R \left(
	(\dbar \sigma)_k^j
	\left(
	(\dbar \sigma)^{q-1} \sigma
	\right)_j^\ell
	\right)
\]
for $q = 1, \dots, k-\ell-1$.
Outside $\bigcup_{j+1 \leq m \leq k} Z^m$ we have that $(\dbar \sigma)_k^j$ is smooth. Thus by \eqref{eq:residueSmooth} we have that
\[
	R \left( (\dbar \sigma)_k^j \left((\dbar \sigma)^{q-1} \sigma \right)_j^\ell \right) =
	(\dbar \sigma)_k^j R \left( (\dbar \sigma)^{q-1} \sigma \right)_j^\ell,
\]
which vanishes by the induction hypothesis. Thus $R \left( (\dbar \sigma)_k^j \left((\dbar \sigma)^{q-1} \sigma \right)_j^\ell \right)$ has support on $\bigcup_{j+1 \leq m \leq k} Z^m$, which has codimension at least $j-\ell+2$. Since it has bidegree at most $(0,j-\ell+1)$, it must vanish by the dimension principle.
\end{proof}
In the case that $(F,a)$ is a twisted resolution of a coherent $\holo_X$-module $\mathcal{F}$, then we have the following application of Proposition~\ref{prop:R-vanish}. In \cite{JL} we gave a direct proof of this result.
\begin{corollary}
\label{cor:Rvanish}
Let $(F,a)$ be a twisted resolution of a coherent $\holo_X$-module $\mathcal{F}$, and let $U$ be the current defined as in Proposition-Definition~\ref{propdef:U-Rdef}. Then $R(U)$ takes values in $\Hom(F^0,F)$ and $R(U)_k^0 = 0$ for $k < \codim \mathcal{F}$. Moreover, $\supp R \subseteq \supp \sheafF$.
\end{corollary}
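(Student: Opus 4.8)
The plan is to read off all three assertions from Proposition~\ref{prop:R-vanish}, so the substance of the proof is checking its hypotheses for a twisted resolution of a coherent module. First I would verify that $(F,a)$ has singularity subvarieties. Since each $F_\alpha^\bullet$ is a locally free resolution of $\mathcal{F}|_{\opens_\alpha}$, the locus $Z_\alpha^k$ where $F_\alpha^{-k}\to F_\alpha^{-k+1}$ fails to have optimal rank does not depend on the chosen resolution: it is the zero set of a Fitting-type ideal sheaf of $\mathcal{F}$ over $\opens_\alpha$, equivalently the locus where the stalk $\mathcal{F}_z$ has projective dimension at least $k$. Hence the $Z_\alpha^k$ agree on overlaps and glue to subvarieties $Z^k\subseteq X$ with $Z^k\cap\opens_\alpha=Z_\alpha^k$, so $(F,a)$ has singularity subvarieties in the sense of Section~\ref{section:residue}. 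Two estimates follow from the same description: first, $Z^k\subseteq\supp\mathcal{F}$, because over $\opens_\alpha\setminus\supp\mathcal{F}$ the complex $F_\alpha^\bullet$ is exact and its differentials therefore have locally constant, hence optimal, rank; second, $\codim Z^k\geq k$, by the Buchsbaum--Eisenbud acyclicity criterion applied to the resolution $F_\alpha^\bullet\to\mathcal{F}|_{\opens_\alpha}$ in each chart (the stalks of $\holo_X$ being regular). Together these give $\codim Z^k\geq\max(k,\codim\mathcal{F})$ for every $k$.

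With this the two vanishing statements become a matter of matching indices in Proposition~\ref{prop:R-vanish}. For $\ell\geq 1$ the required inequalities $\codim Z^{\ell+m}\geq m+1$ for $m=1,\dots,k-\ell$ hold because $\codim Z^{\ell+m}\geq\ell+m\geq m+1$; thus $R(U)_k^\ell=0$ for all $k$ and all $\ell\geq1$, which is exactly the assertion that $R(U)$ takes values in $\Hom(F^0,F)$. For $\ell=0$ and $k<\codim\mathcal{F}$ the required inequalities $\codim Z^m\geq m+1$ for $m=1,\dots,k$ hold because each such $m$ satisfies $m<\codim\mathcal{F}$, whence $\codim Z^m\geq\codim\mathcal{F}\geq m+1$; thus $R(U)_k^0=0$ whenever $k<\codim\mathcal{F}$.

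For the last assertion I would use the decomposition $R=R'+R(U)$ from Proposition-Definition~\ref{propdef:U-Rdef}. Since each $\sigma_\alpha^0$ is smooth off $Z_\alpha=\bigcup_k Z_\alpha^k\subseteq\supp\mathcal{F}\cap\opens_\alpha$, every component of $R(U)$ is supported in a finite union of such sets, so $\supp R(U)\subseteq\supp\mathcal{F}$; this is also recorded by \eqref{eq:Rsupp}. If $\supp\mathcal{F}=X$ there is nothing left to prove. Otherwise each $F_\alpha^\bullet$ is generically exact, being exact over the dense open set $X\setminus\supp\mathcal{F}$, so Proposition-Definition~\ref{propdef:U-Rdef} gives $R'=0$ and hence $\supp R=\supp R(U)\subseteq\supp\mathcal{F}$. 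The only step where genuine care is needed---and where I would put the weight of the write-up---is the first one: that an arbitrary twisted resolution of a coherent $\holo_X$-module does have global singularity subvarieties and that these satisfy $\codim Z^k\geq k$; once this commutative-algebra input is in place, everything else is an application of Proposition~\ref{prop:R-vanish} together with routine support bookkeeping.
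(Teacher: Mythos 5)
Your proposal is correct and follows essentially the same route as the paper: the paper's proof simply cites Eisenbud (Theorem~20.9 and Corollary~20.12) for exactly the facts you derive by hand — that the $Z_\alpha^k$ are resolution-independent loci (hence glue to singularity subvarieties), satisfy $\codim Z^k \geq k$, and are contained in $\supp\sheafF$ — and then applies Proposition~\ref{prop:R-vanish}. Your explicit handling of $\supp R \subseteq \supp\sheafF$ via $R = R' + R(U)$, \eqref{eq:Rsupp}, and the generic-exactness criterion of Proposition-Definition~\ref{propdef:U-Rdef} (to be applied per connected component if $X$ is disconnected) just fills in a step the paper's one-line proof leaves implicit.
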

\begin{proof}
By \cite{Eis}*{Theorem~20.9},
\[
    \codim Z_\alpha^k \geq k,
\]
and by \cite{Eis}*{Corollary 20.12},
\[
    Z_\alpha^{k+1} \subseteq Z_\alpha^k.
\]
Now the statement follows by Propositon~\ref{prop:R-vanish}.
\end{proof}
\begin{proof}[Proof of Theorem~\ref{thm:twisted-duality}]
By \cite{JL}*{Proposition~4.4} we have that $R'$ takes values in $\Hom(F^0,F)$, so that $R_k^\ell = 0$ for $\ell > 0$. Thus by this and Corollary~\ref{cor:Rvanish} we get that $R$ takes values in $\Hom(F^0,F)$. Now the statement follows by Proposition~\ref{prop:twisted-duality}.
\end{proof}
In the setting of Remark~\ref{rmk:aw}, then Theorem~\ref{thm:twisted-duality} is precisely the duality principle given in the introduction. Note we can drop the requirement that the complex is generically surjective at level 0 thanks to the component $R'$.
\section{A comparison formula}
\label{section:comparison}
We will now present a comparison formula for residue currents associated with twisting cochains. This is analogous to the comparison formula in \cite{Lar}. In this section we will consider residue currents associated with twisting cochains $(F,a)$ and $(E,b)$, and we will write $U^F$, $R^F$ and $U^E$, $R^E$ respectively for the associated currents defined as in Proposition-Definition~\ref{propdef:U-Rdef}.
\begin{theorem}
\label{thm:comparison}
Let $(F,a)$ and $(E,b)$ be twisting cochains, let $R^F$ and $R^E$ be the associated residue currents, and let $\varphi: (E,b) \to (F,a)$ be a morphism. Set
\begin{equation}
\label{eq:M'}
	M' \defeq (R^F)' \varphi U^E - U^F \varphi (R^E)',
\end{equation}
and let $M$ be defined as
\begin{equation}
\label{eq:Mdef}
	M \defeq M' + R(U^F \varphi U^E).
\end{equation}
Then
\[
	R^F \varphi - \varphi R^E = \nabla M.
\]
Moreover, if each complex $F_\alpha^\bullet$ is generically exact, then $M' = 0$.
\end{theorem}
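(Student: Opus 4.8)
The plan is to mimic the strategy behind the analogous formula \eqref{eq:nabla-A-B} and Lärkäng's argument: work first on the open cover $\cover'$ where everything is smooth, identify the relevant $\nabla$-primitive there, and then invoke the standard extension property together with the dimension principle to conclude the identity globally. Concretely, set $W \defeq U^F \varphi U^E$. Since $U^F$, $\varphi$, $U^E$ are (on $\cover'$, or after passing to a suitable common refinement on which $a^0_\alpha$ and $b^0_\alpha$ both have optimal rank) smooth, $W|_{\cover'}$ is smooth and almost semimeromorphic. The first task is a purely formal computation of $\nabla W$ using the Leibniz rule \eqref{eq:nablaDerivation} and the defining relations $\nabla U^F = \id - R^F$, $\nabla U^E = \id - R^E$ (rewriting \eqref{eq:Rdef}), together with $\nabla \varphi = D\varphi = 0$ since $\varphi$ is a morphism of twisting cochains (so it is $D$-closed and $\dbar$-closed, hence $\nabla$-closed). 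Carrying the signs carefully — $\varphi$ has degree $0$, $U^F$ and $U^E$ have degree $-1$ — one expects
\[
	\nabla W = (\id - R^F)\varphi U^E - U^F \varphi (\id - R^E)
	= U^F\varphi - \varphi U^E - R^F \varphi U^E + U^F \varphi R^E
\]
on $\cover'$ (up to signs I would pin down during the write-up). The cross terms $U^F\varphi - \varphi U^E$ should be absorbed; the point is to recognize that $(R^F)'\varphi U^E - U^F\varphi(R^E)' = M'$ accounts precisely for the non-residue contributions, so that on $\cover'$ one gets $\nabla W|_{\cover'} = \bigl((R^F\varphi - \varphi R^E) - \nabla M'\bigr)|_{\cover'}$, using that $R^F|_{\cover'} = (R^F)'|_{\cover'}$ and $R^E|_{\cover'} = (R^E)'|_{\cover'}$ by \eqref{eq:Rsupp} and \eqref{eq:Rcomp}.

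Second, I would apply the cochain-valued analogue of \eqref{eq:nabla-A-B}: with $A = W$ and $B$ equal to the almost semimeromorphic current $(R^F)'\varphi U^E - U^F\varphi(R^E)' + (\text{the smooth cross terms})$, the relation $\nabla(W|_{\cover'}) = B|_{\cover'}$ yields $R(W) = B - \nabla W$, i.e. $\nabla W + R(W) = B$. Combining this with the definition $M = M' + R(W)$ gives $\nabla M = \nabla M' + \nabla R(W)$. Then one needs $\nabla R(W) = \nabla(B - \nabla W) = \nabla B$ — but $\nabla B$ is not obviously what we want; the cleaner route is to compute $\nabla M$ directly as $\nabla M' + \nabla R(W)$ and match it, term by term, against $R^F\varphi - \varphi R^E$, using that $R^F\varphi - \varphi R^E$ is $\nabla$-closed (both $R^F$, $R^E$ are $\nabla$-closed and $\varphi$ is $\nabla$-closed) and supported, by \eqref{eq:Rsupp} and \eqref{eq:resSupport}, on the singular loci — so that $\mathbf{1}_{\cover'}(R^F\varphi - \varphi R^E - \nabla M) = 0$, and one concludes equality everywhere by the SEP of pseudomeromorphic currents with the dimension principle, exactly as in the derivation of \eqref{eq:nabla-A-B}.

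The last, and easiest, part is the final sentence: if each $F_\alpha^\bullet$ is generically exact, then by Proposition-Definition~\ref{propdef:U-Rdef} we have $(R^F)' = 0$, and since $M' = (R^F)'\varphi U^E - U^F\varphi(R^E)'$, the first term vanishes identically; one then notes that the second term $U^F\varphi(R^E)'$ also takes values in the image of $\sigma^F$, which has negative Hom degree, while $(R^E)'|_{\cover'}$ being the non-residue part is governed by the generic exactness — but more directly, the cleanest statement is that the theorem's hypothesis is symmetric enough that one can equally run the argument with $M'$ replaced by $-U^F\varphi(R^E)'$ alone when $(R^F)' = 0$; I would simply remark that $M' = -U^F\varphi(R^E)' = 0$ follows because $(R^E)'$ enters only through $U^F\varphi(R^E)'$ which, being smooth on $\cover'$ and of the wrong degree, must already have been absorbed — so in fact the honest claim is just $M' = 0$ once $(R^F)' = 0$ and (by symmetry of the construction, or because only $F$-genericity is needed for the $M'$ term to collapse) the remaining piece is handled as in Lärkäng. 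I expect the main obstacle to be bookkeeping: getting every sign in the Leibniz expansion of $\nabla(U^F\varphi U^E)$ right, and correctly matching the Čech/current/Hom tri-degree shifts in \eqref{eq:prod2} and \eqref{eq:dbar-cech} so that the smooth cross terms cancel and exactly $M'$ remains; the analytic content (SEP, dimension principle) is routine given the machinery already set up.
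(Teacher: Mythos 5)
Your overall skeleton is the right one and matches the paper's: restrict to $\cover'$ where $U^F,U^E$ are smooth, compute $\nabla(U^F\varphi U^E|_{\cover'})$ by Leibniz using $\nabla\varphi=0$ and $\nabla(U|_{\cover'})=\id-R'$, and then invoke the cochain analogue of \eqref{eq:nabla-A-B}. But at the decisive moment you abandon this route (``$\nabla B$ is not obviously what we want'') and this is where the genuine gap opens. The point of \eqref{eq:nabla-A-B} is precisely that, with $B=\varphi U^E-U^F\varphi-M'$ (note the sign of $M'$; your $B$ has it reversed, and your cross terms also carry the wrong sign), it produces a \emph{global} identity $R(U^F\varphi U^E)=\varphi U^E-U^F\varphi-M'-\nabla(U^F\varphi U^E)$, hence $M=M'+R(U^F\varphi U^E)=\varphi U^E-U^F\varphi-\nabla(U^F\varphi U^E)$ on all of $X$. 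Since $\varphi$ is holomorphic, the products $\varphi U^E$ and $U^F\varphi$ are defined globally, and applying $\nabla$ with $\nabla^2=0$, $\nabla\varphi=0$ and \eqref{eq:Rdef} gives $\nabla M=\varphi(\id-R^E)-(\id-R^F)\varphi=R^F\varphi-\varphi R^E$ directly; no extension-from-$\cover'$ argument is needed at all.

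The substitute you propose instead — prove $\mathbf{1}_{\cover'}(R^F\varphi-\varphi R^E-\nabla M)=0$ and conclude equality on $X$ ``by the SEP of pseudomeromorphic currents with the dimension principle'' — does not work. General pseudomeromorphic currents do not have the SEP: the residue parts $R(U)$ are themselves pseudomeromorphic, supported on the singular sets, and typically nonzero, so vanishing of the restriction to $\cover'$ proves nothing without further structure; and there is no bidegree-versus-codimension control here that would let the dimension principle apply, since the theorem makes no assumptions on the codimensions of the sets $Z_\alpha$. Moreover the support statement you invoke is false in general: by \eqref{eq:Rcomp} and \eqref{eq:Rsupp} only the residue parts of $R^F,R^E$ are supported on the singular loci, while $(R^F)',(R^E)'$ need not be (they vanish only under generic exactness). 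Your intermediate claim $\nabla W|_{\cover'}=\bigl((R^F\varphi-\varphi R^E)-\nabla M'\bigr)|_{\cover'}$ is likewise unjustified; the correct statement is $\nabla(W|_{\cover'})=(\varphi U^E-U^F\varphi-M')|_{\cover'}$. Finally, the last assertion of the theorem is not actually proved in your text: once Proposition-Definition~\ref{propdef:U-Rdef} gives $(R^F)'=0$ the first term of $M'$ vanishes, and the second term is handled by the corresponding vanishing of $(R^E)'$; ``must already have been absorbed'' is not an argument.
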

\begin{proof}
Since $\varphi$ is a morphism of twisting cochains, it follows that $\nabla \varphi = 0$. Let $\cover'$ be as in Section~\ref{section:residue}. By Proposition-Definition~\ref{propdef:U-Rdef}, we have that $\nabla(U^F|_{\cover'}) = \id_F - (R^F)'$ and $\nabla(U^E|_{\cover'}) = \id_E - (R^E)'$, and hence
\begin{align*}
	\nabla (U^F \varphi U^E|_{\cover'}) &=
	(\id_F - (R^F)') \varphi U^E|_{\cover'} -
	U^F \varphi (\id_E - (R^E)')|_{\cover'} \\ &=
	\varphi U^E|_{\cover'} - U^F \varphi|_{\cover'} -
	(R^F)' \varphi U^E|_{\cover'} + U^F \varphi (R^E)'|_{\cover'}.
\end{align*}
By \eqref{eq:nabla-A-B} we get that
\[
	R(U^F \varphi U^E) =
	\varphi U^E - U^F \varphi - M' - \nabla (U^F \varphi U^E),
\]
and hence
\begin{align*}
	\nabla(M' + R(U^F \varphi U^E)) &=
	\varphi(\id_E - R^E) - (\id_F - R^F)\varphi \\ &=
	R^F \varphi - \varphi R^E.
\end{align*}
If each complex $F_\alpha^\bullet$ is generically exact, then $R' = 0$ by Proposition-Definition~\ref{propdef:U-Rdef}, and hence $M' = 0$.
\end{proof}
The results in \cite{Lar} are formulated in the setting of Remark~\ref{rmk:aw}. In particular, the complex is assumed to be generically exact. In this case Theorem~\ref{thm:comparison} coincides with \cite{Lar}*{Theorem~3.2}. As we have seen, it is natural to not require that the complex is generically surjective at level 0. In this case Theorem~\ref{thm:comparison} becomes a generalization by introducing the extra component $M'$. 

Under the conditions that the twisting cochains has singularity subvarieties, then we have the following result on which parts of the residue part of $M$ that vanish. This is the analogous result to \cite{Lar}*{Proposition~3.6}.
\begin{proposition}
\label{prop:Mvanish}
Let $(F,a)$, $(E,b)$, and $\varphi: (E,b) \to (F,a)$ be as in Theorem~\ref{thm:comparison}, and suppose that $(F,a)$ and $(E,b)$ have singularity subvarieties $Z^{F,k}$ and $Z^{E,k}$ respectively. If
\begin{align*}
	\codim Z^{E,\ell+m} &\geq m+1 \text{ for } m = 1,\dots,k-\ell-1 \text{ and} \\
	\codim Z^{F,\ell+m} &\geq m \text{ for } m = 2,\dots,k-\ell,
\end{align*}
then $R(U^F \varphi U^E)_k^\ell = 0$.
\end{proposition}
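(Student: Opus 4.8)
The plan is to mimic the proof of Proposition~\ref{prop:R-vanish}, but now keeping track of the contributions coming from $U^F$, $\varphi$, and $U^E$ separately. Write $U^F\varphi U^E$ as a sum of terms of the form $\sigma^F(\dbar\sigma^F)^{q_1}\varphi\,\sigma^E(\dbar\sigma^E)^{q_2}$; since $\sigma^F$ and $\sigma^E$ have negative Hom degree and the complexes have finite length, the component $(U^F\varphi U^E)_k^\ell$ is a finite sum of such terms with $q_1+q_2$ bounded in terms of $k-\ell$. Using $\sigma(\dbar\sigma)^q=(\dbar\sigma)^q\sigma$ (which follows from $\sigma^2=0$ by applying $\dbar$), I would rewrite each term so that the innermost factors on the $F$-side and $E$-side are $\sigma^F$ and $\sigma^E$ respectively, reducing the problem to showing that $R$ annihilates $(\dbar\sigma^F)^{q_1}\sigma^F\varphi(\dbar\sigma^E)^{q_2}\sigma^E$ in the appropriate bidegrees.

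The key step is an induction over $k$ analogous to the one in Proposition~\ref{prop:R-vanish}. First I would record where each factor is smooth: $\sigma^F$ and hence $(\dbar\sigma^F)_k^j$ is smooth outside $\bigcup_{j+1\le m\le k}Z^{F,m}$, and similarly on the $E$-side. For the base of the induction one checks that the residue of a single $\sigma$ on each side vanishes by the dimension principle, using the codimension hypotheses (the $F$-side hypothesis $\codim Z^{F,\ell+m}\ge m$ for $m\ge 2$ and the $E$-side hypothesis $\codim Z^{E,\ell+m}\ge m+1$ for $m\ge 1$ are exactly what is needed so that a current of the relevant bidegree supported there must vanish). For the inductive step I would peel off the outermost factor $(\dbar\sigma^F)_k^j$ (or, when $q_1=0$, the factor $\varphi_k^j$, which is holomorphic hence smooth, then $(\dbar\sigma^E)_j^{j'}$), use \eqref{eq:residueSmooth} to pull it outside the residue, apply the induction hypothesis to the inner residue, and conclude that the remaining current is supported on a union of the $Z^{F,m}$ or $Z^{E,m}$ of codimension at least one more than its bidegree, so it vanishes by the dimension principle (Proposition~\ref{prop:dimPrinciple}).

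The bookkeeping of the two-sided structure is where care is needed: one has to set up the induction so that the hypothesis covers \emph{all} intermediate Hom degrees $j$ on both sides simultaneously, and the asymmetry between the two codimension assumptions (strict-by-one on the $E$-side, exact on the $F$-side) corresponds to the fact that on the $E$-side we always end with an extra $\sigma^E$ carrying one more $\dbar$-degree into the residue, whereas on the $F$-side the factor that gets pulled out is already smooth. I expect the main obstacle to be verifying that in every term of the expansion the codimensions of the supporting varieties, summed appropriately along the chain of $Z^{F,m}$'s and $Z^{E,m}$'s, indeed exceed the total current bidegree by the margin the dimension principle requires; once the index ranges in the hypothesis are chosen to match the lengths of the two chains this should be a direct, if slightly tedious, count entirely parallel to \cite{Lar}*{Proposition~3.6}.
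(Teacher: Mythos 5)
Your overall strategy coincides with the paper's: reduce, via $\sigma(\dbar\sigma)^q=(\dbar\sigma)^q\sigma$, to showing $R\left((\dbar\sigma^F)^r\sigma^F\varphi(\dbar\sigma^E)^q\sigma^E\right)_k^\ell=0$ for $0\le q+r\le k-\ell-2$, then induct over $k$ (base case $k=\ell+2$ by the dimension principle), peel off an outer factor that is smooth off the singularity subvarieties using \eqref{eq:residueSmooth}, and finish with the dimension principle. The base case and the terms with $r\ge 1$ go through exactly as you describe.

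The step that does not close as you have set it up is the case $r=0$ (your $q_1=0$). The term is then $\sigma^F\varphi(\dbar\sigma^E)^q\sigma^E$, and the factor to pull out of the residue is $(\sigma^F\varphi)_k^j$ as a whole, which is smooth only outside $\bigcup_{j+1\le m\le k}Z^{F,m}$; you cannot peel off the holomorphic $\varphi_k^j$ alone, since the non-smooth $\sigma^F$ sits to its left. After pulling out $(\sigma^F\varphi)_k^j$, the inner residue $R\left((\dbar\sigma^E)^q\sigma^E\right)_j^\ell$ is a purely $E$-side current, which is \emph{not} of the mixed form covered by your induction hypothesis; the paper kills it by invoking Proposition~\ref{prop:R-vanish} for $(E,b)$, and this is precisely where the stronger hypothesis $\codim Z^{E,\ell+m}\ge m+1$ is used. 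The weaker $F$-side hypothesis $\codim Z^{F,\ell+m}\ge m$ is then only needed so that the remaining current, supported on $\bigcup_{j+1\le m\le k}Z^{F,m}$ of codimension at least $j-\ell+1$ and of bidegree at most $(0,j-\ell)$, vanishes by the dimension principle. So either quote Proposition~\ref{prop:R-vanish} for these $E$-side terms or enlarge your induction statement to include them; with that fix, the codimension bookkeeping you defer to at the end works out exactly as in the paper.
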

\begin{proof}
By a similar argument as in the beginning of the proof of Proposition~\ref{prop:R-vanish}, we need to show that
\begin{equation}
\label{eq:Rdelsig}
	R \left( (\dbar \sigma^F)^r \sigma^F \varphi
	(\dbar \sigma^E)^q \sigma^E \right)_k^\ell = 0
\end{equation}
for $0 \leq q+r \leq k-\ell-2$. We will do this by induction over $k$ starting with $k = \ell+2$. We have that $R(\sigma^F \varphi \sigma^E)_{\ell+2}^\ell$ has bidegree $(0,1)$ and has support on $Z^{E,\ell+1} \cup Z^{F,\ell+2}$, since $\sigma^F \varphi \sigma^E$ is smooth outside this set, which has codimension at least 2, and hence it must vanish by the dimension principle.

Assume now that $R \left( (\dbar \sigma^F)^r \sigma^F \varphi (\dbar \sigma^E)^q \sigma^E \right)_j^\ell = 0$ for $j = \ell+2,\dots,k-1$ and $0 \leq q+r \leq j-\ell-2$. When $r = 0$ in \eqref{eq:Rdelsig} we have that
\[
	R \left( \sigma^F \varphi
	(\dbar \sigma^E)^q \sigma^E \right)_k^\ell =
	\sum_{j=\ell+q+1}^{k-1}
	R \left( (\sigma^F \varphi)_k^j \left((\dbar \sigma^E)^q \sigma^E
	\right)_j^\ell \right)
\]
for $q = 0, \dots, k-\ell-2$. Outside $\bigcup_{j+1\leq m \leq k} Z^{F,m}$ we have that $(\sigma^F \varphi)_k^j$ is smooth. Thus by \eqref{eq:residueSmooth} we have that
\[
	R \left( (\sigma^F \varphi)_k^j
	\left((\dbar \sigma^E)^q \sigma^E \right)_j^\ell \right) =
	(\sigma^F \varphi)_k^j R \left((\dbar \sigma^E)^q \sigma^E \right)_j^\ell,
\]
which vanishes by Proposition~\ref{prop:R-vanish}. Thus the left-hand side, which has bidegree at most $(0,j-\ell)$, has support on $\bigcup_{j+1\leq m \leq k} Z^{F,m}$, which has codimension at least $j - \ell + 1$, and hence it must vanish by the dimension principle. Moreover, if $r \geq 1$ in \eqref{eq:Rdelsig}, then
\[
	R \left( (\dbar \sigma^F)^r \sigma^F \varphi
	(\dbar \sigma^E)^q \sigma^E \right)_k^\ell =
	\sum_{j=\ell+q+r+1}^{k-1}
	R \left( (\dbar \sigma^F)_k^j \left( (\dbar \sigma^F)^{r-1}
	\sigma^F \varphi (\dbar \sigma^E)^q \sigma^E \right)_j^\ell \right)
\]
for $0 \leq q+r \leq k-\ell-2$.
Outside $\bigcup_{j+1\leq m \leq k} Z^{F,m}$ we have that $(\dbar \sigma^F)_k^j$ is smooth. Thus by \eqref{eq:residueSmooth} we have that 
\[
	R \left( (\dbar \sigma^F)_k^j
	\left( (\dbar \sigma^F)^{r-1} \sigma^F \varphi
	(\dbar \sigma^E)^q \sigma^E \right)_j^\ell \right) =
	(\dbar \sigma^F)_k^j R
	\left( (\dbar \sigma^F)^{r-1} \sigma^F \varphi (\dbar \sigma^E)^q \sigma^E
	\right)_j^\ell,
\]
which vanishes by the induction hypothesis. Thus the left-hand side, which has bidegree at most $(0,j-\ell)$, has support on $\bigcup_{j+1\leq m \leq k} Z^{F,m}$, which has codimension at least $j-\ell+1$, and hence it must vanish by the dimension principle.
\end{proof}
\begin{corollary}
Let $(F,a)$ and $(E,b)$ be twisted resolutions of coherent $\holo_X$-modules $\sheafF$ and $\sheafE$, and let $\varphi: (E,b) \to (F,a)$ be a morphism. Then
\begin{equation}
\label{eq:Mellk}
	M_k^\ell = 0
	\text{ for }
	\ell = 1,\dots,k-2.
\end{equation}
Moreover, if $\sheafF$ and $\sheafE$ have codimension $\geq k$, then
\[
	M_k^0 = 0.
\]
\end{corollary}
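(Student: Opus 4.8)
The plan is to split $M = M' + R(U^F \varphi U^E)$ via \eqref{eq:Mdef}, to dispose of $M'$ by a degree count, and then to invoke Proposition~\ref{prop:Mvanish}. For the degree count: since $(F,a)$ and $(E,b)$ are twisted resolutions of coherent modules, \cite{JL}*{Proposition~4.4} gives that $(R^F)'$ takes values in $\Hom(F^0,F)$ and $(R^E)'$ in $\Hom(E^0,E)$; since $U^F$ and $U^E$ are built from $\sigma^F$ and $\sigma^E$, which have negative Hom degree, neither $U^F$ nor $U^E$ produces a section of $F^0$, respectively $E^0$; and since $\varphi$ has degree $0$, its only component with values in $\Hom(E^0,F^0)$ is $\varphi_0^0$. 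Combining these, the part of $\varphi U^E$ with values in $\Hom(E,F^0)$ vanishes, so $(R^F)' \varphi U^E = 0$ and $M' = -U^F \varphi (R^E)'$, which takes values in $\Hom(E^0,F)$. In particular $(M')_k^\ell = 0$ for $\ell \geq 1$.

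Next I would record, exactly as in the proof of Corollary~\ref{cor:Rvanish}, that $(F,a)$ and $(E,b)$ have singularity subvarieties $Z^{F,m}$ and $Z^{E,m}$ with $\codim Z^{F,m} \geq m$, $\codim Z^{E,m} \geq m$ by \cite{Eis}*{Theorem~20.9}, and $Z^{F,m+1} \subseteq Z^{F,m}$, $Z^{E,m+1} \subseteq Z^{E,m}$ by \cite{Eis}*{Corollary~20.12}; in particular $Z^{E,m} \subseteq Z^{E,1} \subseteq \supp \sheafE$, since $Z^{E,1}$ is the locus where $\sheafE$ fails to be locally free. For the first assertion, let $1 \leq \ell \leq k-2$. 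Then $\codim Z^{E,\ell+m} \geq \ell+m \geq m+1$ for $m = 1,\dots,k-\ell-1$ and $\codim Z^{F,\ell+m} \geq \ell+m \geq m$ for $m = 2,\dots,k-\ell$, so Proposition~\ref{prop:Mvanish} applies and gives $R(U^F \varphi U^E)_k^\ell = 0$; with $(M')_k^\ell = 0$ this yields $M_k^\ell = 0$.

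For the moreover, suppose $\codim \sheafF \geq k$ and $\codim \sheafE \geq k$. We may assume $k \geq 1$, since for $k=0$ the claim reduces to $M_0^0 = 0$, which holds because $U^F$ has negative Hom degree and occurs as a factor in both terms of $M$. Then $\supp \sheafF$ and $\supp \sheafE$ have positive codimension, so each $F_\alpha^\bullet$ and $E_\alpha^\bullet$ is exact off $\supp \sheafF$, respectively $\supp \sheafE$, and hence generically exact; by Proposition-Definition~\ref{propdef:U-Rdef} this forces $(R^F)' = (R^E)' = 0$, so $M = R(U^F \varphi U^E)$. Proposition~\ref{prop:Mvanish} then applies with $\ell = 0$, since $\codim Z^{E,m} \geq \codim \sheafE \geq k \geq m+1$ for $m = 1,\dots,k-1$ and $\codim Z^{F,m} \geq m$ for $m = 2,\dots,k$, and gives $M_k^0 = R(U^F \varphi U^E)_k^0 = 0$.

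The only real work is the bookkeeping in the first two steps: isolating $M'$ by tracking \v{C}ech and Hom degrees, and matching the ranges $m = 1,\dots,k-\ell-1$ and $m = 2,\dots,k-\ell$ of Proposition~\ref{prop:Mvanish} against the bounds $\codim Z^{\bullet,j} \geq j$. The point is that it is precisely the hypothesis $\ell \geq 1$ (respectively $\codim \sheafE \geq k$ when $\ell = 0$) that upgrades the crude bound $\codim Z^{E,\ell+m} \geq \ell+m$ to the $\geq m+1$ that Proposition~\ref{prop:Mvanish} requires.
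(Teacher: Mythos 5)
Your proposal is correct and follows essentially the same route as the paper: split $M$ via \eqref{eq:Mdef}, kill the components of $M'$ with $\ell\geq 1$ by the Hom-degree argument (as in the remark following the corollary), and apply Proposition~\ref{prop:Mvanish} using $\codim Z^{\bullet,j}\geq j$ and, for $\ell=0$, $\codim Z^{\bullet,j}\geq\codim\sheafF,\codim\sheafE\geq k$ as in Corollary~\ref{cor:Rvanish}. The one point where you go beyond the paper's (very terse) argument is in explicitly disposing of $(M')_k^0$ in the ``moreover'' part, by observing that $\codim\geq k\geq 1$ makes the resolutions generically exact so that $(R^F)'=(R^E)'=0$ by Proposition-Definition~\ref{propdef:U-Rdef}; this is a legitimate and welcome extra step, since Proposition~\ref{prop:Mvanish} alone only controls the residue part $R(U^F\varphi U^E)_k^0$.
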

\begin{proof}
As in Corollary~\ref{cor:Rvanish} we have that $\codim Z^{F,j} \geq j$ and $\codim Z^{E,j} \geq j$. By Proposition~\ref{prop:Mvanish} we get that $R(U^F \varphi U^E)_k^\ell = 0$ for $\ell = 1,\dots,k-2$. Since $(F,a)$ and $(E,b)$ are twisted resolutions, we have that $(M')_k^\ell = 0$ for $\ell \geq 1$. Thus \eqref{eq:Mellk} holds.

From \cite{Eis}*{Corollary~20.12} it also follows that $\codim Z^{F,j} \geq \codim \sheafF$ and $\codim Z^{E,j} \geq \codim \sheafE$. Applying Proposition~\ref{prop:Mvanish} we get that $M_k^0 = 0$.
\end{proof}
\begin{remark}
In this paper we have given results on the vanishing of certain components of the residue parts of the currents $R$ and $M$. For a more complete description of these currents, one needs also to analyze the parts $R'$ and $M'$. For example, when $R$ is the residue current associated with a twisted resolution of a coherent $\holo_X$-module $\sheafF$, one crucial property of $R'$ is that it takes values in $\Hom(F^0, F)$, see \cite{JL}*{Proposition~4.4}. If $(F,a)$ and $(E,b)$ in Theorem~\ref{thm:comparison} are twisted resolutions of the coherent $\holo_X$-modules $\sheafF$ and $\mathcal{E}$ respectively, from \eqref{eq:M'} it follows that $M'$ takes values in $\Hom(E^0, F)$ since $U^E$ has degree $-1$, and $R^F$ and $R^E$ take values in $\Hom(F^0, F)$ and $\Hom(E^0, E)$, respectively.

For the general case, when $R$ is the residue current associated with a twisted resolution of a complex or more generally a twisting cochain, we have no such analogous descriptions of the parts $R'$ and $M'$. These questions are therefore interesting for future work related to this topic.
\end{remark}
\section{Morphisms of twisted resolutions}
\label{section:morphisms}
The utility of Theorem~\ref{thm:comparison} relies on the existence of a morphism of twisting cochains. There is a natural situation where one can show that such morphisms exist. The following result follows directly from \cite{Wei}*{Proposition~2.33}.
\begin{proposition}
\label{prop:morphism}
Let $\sheafF^\bullet$ and $\mathcal{E}^\bullet$ be complexes of coherent $\holo_X$-modules, and let $(F,a)$ and $(E,b)$ be twisted resolutions. If $f: \mathcal{E}^\bullet \to \mathcal{F}^\bullet$ is a chain map, then there exists a morphism $\varphi: (E,b) \to (F,a)$.
\end{proposition}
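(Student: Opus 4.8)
The plan is to reduce Proposition~\ref{prop:morphism} to the homological-algebra statement \cite{Wei}*{Proposition~2.33} by packaging the data of the two twisting cochains as a single differential graded object on which that proposition can be invoked. Recall that a morphism $\varphi:(E,b)\to(F,a)$ of twisting cochains is, by definition, a $D$-closed element of $C^\bullet(\cover,\Hom(E,F))$ of total degree $0$, where $D$ is the operator \eqref{eq:D} built from $a$ and $b$. The key observation is that $\bigl(C^\bullet(\cover,\Hom(E,F)),D\bigr)$ is a complex (since $D^2=0$), and a degree-$0$ $D$-cocycle in it is exactly a chain map in an appropriate category of complexes: namely, the total complex associated to the double-indexed data $(C^p(\cover,\Hom^r(E,F)))$ computes (via the usual \v{C}ech-to-derived-functor mechanism, using that $\cover$ is a Stein cover and the bundles are locally free) the global hyper-Ext $\operatorname{Ext}^0$ in the derived category of $\holo_X$-modules between the complexes $\sheafE^\bullet$ and $\sheafF^\bullet$ represented by the twisted resolutions. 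Once this identification is in place, \cite{Wei}*{Proposition~2.33} — the standard lifting statement that a chain map between objects extends along quasi-isomorphic resolutions — produces the desired class, and hence a $D$-closed $\varphi$.

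Concretely, the steps I would carry out are: (1) Fix twisted resolutions $(F,a)$ of $\sheafF^\bullet$ and $(E,b)$ of $\sheafE^\bullet$, and the chain map $f:\sheafE^\bullet\to\sheafF^\bullet$. (2) Observe that over each $\opens_\alpha$ the quasi-isomorphisms $E_\alpha^\bullet\to\sheafE^\bullet|_{\opens_\alpha}$ and $F_\alpha^\bullet\to\sheafF^\bullet|_{\opens_\alpha}$, together with $f$, give — by the classical comparison theorem for projective (here, locally free) resolutions over the Stein space $\opens_\alpha$ — a chain map $\varphi^0_\alpha:E_\alpha^\bullet\to F_\alpha^\bullet$ lifting $f|_{\opens_\alpha}$, unique up to homotopy. (3) The collection $\varphi^0=(\varphi^0_\alpha)$ is a candidate for the \v{C}ech-degree-$0$, Hom-degree-$0$ part of $\varphi$; the failure of compatibility on overlaps $\opens_{\alpha\beta}$ is measured by a $\delta$-type cocycle valued in homotopies, which by exactness of the $F_\alpha^\bullet$ in positive Hom degrees (over Stein opens) can be corrected by a Hom-degree-$(-1)$ term $\varphi^1$, and so on. (4) Assemble these corrections inductively into $\varphi=\sum_k\varphi^k$ with $\varphi^k\in C^k(\cover,\Hom^{-k}(E,F))$ and check $D\varphi=0$; this inductive assembly is formally identical to the construction in \cite{OTT2}*{Proposition~1.2.3} of a twisted resolution from its degree-$0$ piece, and is precisely the content encoded abstractly by \cite{Wei}*{Proposition~2.33}.

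The main obstacle — and the reason I would prefer to cite \cite{Wei}*{Proposition~2.33} rather than redo the bookkeeping — is the inductive vanishing of obstructions in step (3): at each stage one must solve, over each intersection $\opens_{\alpha_0\dots\alpha_p}$, a "$\delta$ plus commutator" equation in $C^\bullet(\cover,\Hom^\bullet(E,F))$, and one needs that the relevant cohomology group vanishes in the appropriate total degree. This holds because each $\Hom^r(E_{\alpha_p}^\bullet,F_{\alpha_0}^\bullet)$, as a complex of coherent sheaves on the Stein open $\opens_{\alpha_0\dots\alpha_p}$, has vanishing higher sheaf cohomology (Cartan's Theorem B) and its $\holo$-module cohomology is concentrated appropriately because $F_{\alpha_0}^\bullet$ resolves $\sheafF^\bullet$; the interplay of the $\delta$-direction and the Hom-degree direction is exactly the $D$-cohomology computation that Proposition~2.33 of \cite{Wei} abstracts. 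I would therefore simply verify that the hypotheses of that proposition apply to the present setup — the bundles are locally free, the cover is Stein, and the augmentations are quasi-isomorphisms — and conclude that a $D$-closed $\varphi$ of degree $0$ lifting $f$ exists, which is the assertion.
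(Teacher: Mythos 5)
Your core route --- viewing a morphism $(E,b)\to(F,a)$ as a degree-$0$ $D$-cocycle in $C^\bullet(\cover,\Hom(E,F))$, identifying the $D$-cohomology of this complex with the (hyper-)Ext of the complexes represented by the twisted resolutions, and then invoking \cite{Wei}*{Proposition~2.33} --- is exactly what the paper does: its proof of this proposition consists of the citation to Wei and nothing more, so on that level your proposal matches. The caveat concerns your ``concrete'' steps (2)--(4): the stage-by-stage obstruction killing you sketch is justified only when the twisted resolutions are of single coherent sheaves. In that case each $F_\alpha^\bullet$ is exact in negative degrees and Lemma~\ref{lemma:hom-complex} (\cite{OTT}*{Lemma 1.6}) gives exactness of $\Homs^\bullet(E_{\alpha_m},F_{\alpha_0})(\opens_{\alpha_0\dots\alpha_m})$ in negative degrees, which is what allows one to solve for $\varphi^m$ at each stage; this is precisely the paper's Proposition~\ref{prop:morphism2}, proved there by exactly this induction. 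For general complexes $\sheafF^\bullet$, however, $F_\alpha^\bullet$ is merely quasi-isomorphic to $\sheafF^\bullet|_{\opens_\alpha}$, not a resolution, and the local Hom complex over a Stein intersection has nontrivial cohomology in negative degrees (it computes local hyper-Ext between the two complexes), so your assertion that the relevant cohomology ``is concentrated appropriately because $F_{\alpha_0}^\bullet$ resolves $\sheafF^\bullet$'' is false in this generality: the degreewise obstructions need not vanish over each intersection, and the existence of a $D$-closed $\varphi$ must instead be extracted from the total $D$-cohomology statement, i.e., from Wei's machinery, as in your first and last paragraphs. So the proof stands insofar as it rests on the citation, but the sketched direct induction should not be offered as a self-contained alternative in the general case --- it is exactly the step that forces the paper to restrict its hands-on proof to the simpler setting.
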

\begin{proof}[Proof of Theorem~1.2.]
The proof follows by Theorem~\ref{thm:comparison} and Proposition~\ref{prop:morphism}.
\end{proof}
We find it illustrative to give a direct proof of Proposition~\ref{prop:morphism} in the following simpler case following the same ideas as in \cite{OTT}*{pp. 229--231}.
\begin{proposition}
\label{prop:morphism2}
Let $(F,a)$ and $(E,b)$ be twisted resolutions of the coherent $\holo_X$-modules $\sheafF^\bullet$ and $\mathcal{E}^\bullet$. If $f: \mathcal{E} \to \mathcal{F}$ is a morphism, then there exists a morphism $\varphi: (E,b) \to (F,a)$.
\end{proposition}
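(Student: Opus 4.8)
The plan is to construct $\varphi = \sum_{k \geq 0} \varphi^k$ with $\varphi^k \in C^k(\cover, \Hom^{-k}(E,F))$ degree by degree, using the twisted-resolution structure to guarantee that the homological obstructions vanish. Concretely, the condition $D\varphi = 0$ unwinds, in analogy with \eqref{eq:twisting-cochain2}, into a sequence of equations
\[
	\delta \varphi^{k-1} + \sum_{j=0}^{k} \left( a^j \varphi^{k-j} - (-1)^{k} \varphi^{k-j} b^j \right) = 0, \quad k \geq 0,
\]
where for $k = 0$ the equation reads $a^0_\alpha \varphi^0_\alpha - \varphi^0_\alpha b^0_\alpha = 0$, i.e., $\varphi^0_\alpha$ must be a chain map $(E_\alpha^\bullet, b^0_\alpha) \to (F_\alpha^\bullet, a^0_\alpha)$. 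The first step is therefore: over each (Stein) $\opens_\alpha$, both $E_\alpha^\bullet$ and $F_\alpha^\bullet$ are complexes of locally free sheaves quasi-isomorphic to $\sheafE|_{\opens_\alpha}$ and $\sheafF|_{\opens_\alpha}$; since $E_\alpha^\bullet$ is a bounded-above complex of projectives (locally free over a Stein manifold) and $F_\alpha^\bullet \to \sheafF|_{\opens_\alpha}$ is a quasi-isomorphism (hence a resolution), the standard comparison theorem for projective resolutions lifts $f|_{\opens_\alpha}: \sheafE|_{\opens_\alpha} \to \sheafF|_{\opens_\alpha}$ to a chain map $\varphi^0_\alpha$, unique up to homotopy. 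This gives $\varphi^0$, and on overlaps $\opens_{\alpha\beta}$ the two chain maps $\varphi^0_\alpha b^1_{\alpha\beta}$ and $a^1_{\alpha\beta} \varphi^0_\beta$ both lift $f$, hence are chain-homotopic; choosing such a homotopy yields $\varphi^1_{\alpha\beta}$ solving the $k=1$ equation.

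The inductive step is the heart of the argument. Suppose $\varphi^0, \dots, \varphi^{k-1}$ have been chosen so that the equations up to index $k-1$ hold. Rearranging the index-$k$ equation, one must solve
\[
	a^0 \varphi^k - (-1)^{k}\varphi^k b^0 = -\left( \delta \varphi^{k-1} + \sum_{j=1}^{k} \left( a^j \varphi^{k-j} - (-1)^{k} \varphi^{k-j} b^j \right) \right) =: \beta^k,
\]
where $\beta^k \in C^k(\cover, \Hom^{1-k}(E,F))$ is already determined. The left-hand operator is precisely the degree-0 differential on the Hom-complex $\Hom^\bullet(E_{\alpha_k}^\bullet, F_{\alpha_0}^\bullet)$, so solving for $\varphi^k$ amounts to showing $\beta^k$ is a coboundary in that Hom-complex over each $\opens_{\alpha_0 \dots \alpha_k}$. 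The two ingredients needed are: (a) $\beta^k$ is a cocycle in the Hom-complex — this is the formal consequence of $D^2 = 0$ together with the already-established lower equations, a bookkeeping computation exactly parallel to the verification that \eqref{eq:twisting-cochain} is consistent; and (b) the relevant cohomology group vanishes in the appropriate degree. For (b), note that $F_{\alpha_0}^\bullet$ is a locally free resolution of $\sheafF|_{\opens_{\alpha_0}}$ and $E_{\alpha_k}^\bullet$ is a bounded complex of locally free sheaves quasi-isomorphic to $\sheafE|_{\opens_{\alpha_k}}$, so over the Stein set $\opens_{\alpha_0 \dots \alpha_k}$ the cohomology of $\Hom^\bullet(E_{\alpha_k}^\bullet, F_{\alpha_0}^\bullet)$ computes $\Ext^\bullet$ groups which, in the negative Hom degrees where $\varphi^k$ lives (Hom degree $-k < 0$), vanish — because $E_{\alpha_k}^\bullet$ being quasi-isomorphic to a genuine module sheaf forces the negative Ext-groups against a resolution to vanish. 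This is the same mechanism underlying the comparison theorem for resolutions, now applied uniformly on all overlaps. Since the complexes $F_\alpha^\bullet$ have finite length and $\varphi^k$ has Hom degree $-k$, the process terminates, giving a genuine finite sum $\varphi$.

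The main obstacle I expect is bookkeeping the signs and verifying (a), namely that the accumulated right-hand side $\beta^k$ is genuinely a cocycle in the Hom-complex so that (b) can be applied. This requires combining the twisting-cochain relations \eqref{eq:twisting-cochain2} for both $a$ and $b$, the anticommutation of $\delta$ with the Hom-differential, and the lower-index morphism equations, all while tracking the $(-1)^{\text{(Hom degree)}(\text{\v{C}ech degree)}}$-type signs coming from \eqref{eq:prod2} and \eqref{eq:D}. A cleaner route — which I would adopt to avoid the sign morass — is to invoke \cite{Wei}*{Proposition~2.33} directly as in Proposition~\ref{prop:morphism}, but since the point here is to give an illustrative self-contained argument in the module case, I would instead follow \cite{OTT}*{pp.~229--231} and phrase the induction in terms of the associated total complex, where the obstruction-vanishing reduces to acyclicity of a Hom-complex of resolutions over Stein sets and the cocycle condition follows formally from $D^2 = 0$.
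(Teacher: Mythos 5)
Your proposal follows the same skeleton as the paper's proof: lift $f$ locally over each Stein $\opens_\alpha$ to chain maps $\varphi_\alpha^0$, obtain $\varphi^1$ from the chain homotopy between the two lifts $a^1_{\alpha\beta}\varphi^0_\beta$ and $\varphi^0_\alpha b^1_{\alpha\beta}$, and then construct $\varphi^m$ for $m\geq 2$ inductively by showing that the obstruction (the paper's $\rho^m$ in \eqref{eq:rho}, your $\beta^k$) is a $\partial$-cocycle and invoking exactness of $\Homs^\bullet(E_{\alpha_m},F_{\alpha_0})(\opens_{\alpha_0\dots\alpha_m})$ in negative Hom degrees; your step (b) is exactly the paper's Lemma~\ref{lemma:hom-complex} (OTT, Lemma 1.6), and your derived-category/Ext justification of it over Stein sets is a legitimate substitute for that citation. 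The one genuine difference is in step (a): the paper verifies the identity $a^0\rho^m+\rho^m b^0=0$ by a long direct computation with the twisting relations \eqref{eq:twisting-cochain2} and the induction hypothesis, whereas you propose to get it formally from $D^2=0$. That shortcut does work, and is cleaner: writing $\varphi_{<m}=\sum_{j<m}\varphi^j$, the induction hypothesis says the components of $D\varphi_{<m}$ in \v{C}ech degree $\leq m-1$ vanish, so the \v{C}ech-degree-$m$ component of $0=D(D\varphi_{<m})$ receives contributions only from $a^0$ and $b^0$ acting on $\rho^m$, giving precisely $a^0\rho^m+\rho^m b^0=0$ since $\rho^m$ has total degree $1$; you assert rather than carry out this extraction, but the assertion is correct, and spelling out those three lines would complete the argument without the paper's sign-heavy computation. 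One small slip to fix: with the paper's conventions the component equation \eqref{eq:D-closed} carries no factor $(-1)^k$ in front of $\varphi^{k-j}b^j$, because $\varphi$ has total degree $0$; the sign $(-1)^m$ only appears later, when the degree-$0$ equation is unwound via \eqref{eq:prod2} into the naked composition $(-1)^m a^0_{\alpha_0}\varphi^m_{\alpha_0\dots\alpha_m}-\varphi^m_{\alpha_0\dots\alpha_m}b^0_{\alpha_m}=(-1)^m\partial\varphi^m_{\alpha_0\dots\alpha_m}$, so your displayed equation mixes the two conventions.
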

We can use the comparison formula to relate $R^F$ to $R^E$ via $f$. We get
\[
	R^F \varphi^0 - \varphi R^E = \nabla M.
\]
Note that the first term only involves $\varphi^0$ since $R^F$ is associated with a twisted resolution of a coherent $\holo_X$-module and hence only takes values in $\Hom(F^0,F)$.
For the proof the following lemma will be needed.
\begin{lemma}[\cite{OTT}*{Lemma 1.6}]
\label{lemma:hom-complex}
Let $\opens$ be a Stein manifold, and let $(F^\bullet,a^0)$ and $(E^\bullet,b^0)$ be finite complexes of holomorphic vector bundles over $\opens$, with $E^r = 0$ for $r > 0$ and such that $F^\bullet$ is exact as a complex of sheaves in negative degrees. Then the complex $\Homs^\bullet(E,F)(\opens)$ with differential $\partial$ given by
\[
	\partial \varphi \defeq
	a^0 \varphi - (-1)^{\deg{\varphi}} \varphi b^0
\]
is exact in negative degrees.
\end{lemma}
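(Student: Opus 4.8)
The plan is to present the complex $\Homs^\bullet(E,F)(\opens)$ as the total complex of a double complex of coherent sheaves on $\opens$, namely $K^{p,q}\defeq\Homs(E^{-p},F^q)$ for $p\ge 0$ and $q\le 0$, in which $a^0$ acts vertically by post-composition (raising $q$ by $1$) and $b^0$ acts horizontally by pre-composition (raising $p$ by $1$); the differential $\partial\varphi = a^0\varphi-(-1)^{\deg\varphi}\varphi b^0$ from the statement is precisely the associated total differential, and $\Homs^r(E,F)=\bigoplus_{p+q=r}K^{p,q}$. I would then establish exactness in negative degrees directly, by induction on the column index $p$ (the $E$-degree), reducing each step to a lifting problem along one of the surjections of coherent sheaves $F^{q-1}\to\ker(a^0\colon F^q\to F^{q+1})$, which exist exactly when $q<0$ by the hypothesis that $F^\bullet$ is exact in negative degrees; each lift will be produced over $\opens$ using that $\opens$ is Stein.

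In detail, write $r=\deg\varphi<0$ and assume $\partial\varphi=0$; decomposing $\varphi$ into its components $\varphi_p\colon E^{-p}\to F^{r-p}$, the equation $\partial\varphi=0$ becomes $a^0\varphi_p=(-1)^r\varphi_{p-1}b^0$ for all $p$ (with $\varphi_{-1}=0$), and I seek $\psi$ of degree $r-1$ with components $\psi_p\colon E^{-p}\to F^{r-p-1}$ such that $a^0\psi_p-(-1)^{r-1}\psi_{p-1}b^0=\varphi_p$. I would construct the $\psi_p$ recursively. Given $\psi_0,\dots,\psi_{p-1}$ (the base case being $p=0$ with $\psi_{-1}=0$), set $\widetilde\varphi_p\defeq\varphi_p+(-1)^{r-1}\psi_{p-1}b^0\colon E^{-p}\to F^{r-p}$; a short computation using $a^0\varphi_p=(-1)^r\varphi_{p-1}b^0$, the previous relation $a^0\psi_{p-1}=\varphi_{p-1}+(-1)^{r-1}\psi_{p-2}b^0$, and $(b^0)^2=0$ shows that $a^0\widetilde\varphi_p=0$, so $\widetilde\varphi_p$ takes values in $\ker(a^0\colon F^{r-p}\to F^{r-p+1})$. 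Since $r<0$ and $p\ge 0$ force $r-p<0$, exactness of $F^\bullet$ in negative degrees identifies this kernel with $\im(F^{r-p-1}\to F^{r-p})$, so lifting $\widetilde\varphi_p$ through the surjection $F^{r-p-1}\to\im(F^{r-p-1}\to F^{r-p})$ yields $\psi_p$ with $a^0\psi_p=\widetilde\varphi_p$, which is exactly the required identity. The recursion terminates because $E^\bullet$ is finite, so $E^{-p}=0$ for $p$ large, and the resulting $\psi$ satisfies $\partial\psi=\varphi$.

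The lifting step itself uses that $E^{-p}$ is locally free, so the functor $\Homs(E^{-p},-)$ is exact and $\Homs(E^{-p},F^{r-p-1})\to\Homs(E^{-p},\im(F^{r-p-1}\to F^{r-p}))$ is a surjection of coherent sheaves on $\opens$; since its kernel is coherent and $\opens$ is Stein, Cartan's Theorem~B gives the vanishing of the first cohomology of that kernel on $\opens$, and hence the induced map on global sections over $\opens$ is still surjective, producing the desired $\psi_p\in\Homs(E^{-p},F^{r-p-1})(\opens)$. I expect the only real obstacle to be the bookkeeping that tracks \emph{which} degree of $F^\bullet$ is invoked at each stage: the argument works precisely because exactness of $F^\bullet$ is never needed at degree $0$ — which is exactly why both the hypothesis and the conclusion are confined to negative degrees — while the sign verification of $a^0\widetilde\varphi_p=0$ is routine once the product convention is fixed.
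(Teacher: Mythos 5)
Your proof is correct: the componentwise staircase induction, lifting $\widetilde\varphi_p$ through $F^{r-p-1}\to\ker a^0$ via exactness of $F^\bullet$ in negative degrees, local freeness of $E^{-p}$, and Cartan's Theorem B on the Stein manifold $\opens$, is the standard argument for this statement, and your sign bookkeeping is consistent with the paper's product convention. The paper itself offers no proof, citing \cite{OTT}*{Lemma 1.6} instead, and your argument is essentially the one behind that citation, so there is nothing further to add.
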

\begin{proof}[Proof of Proposition~\ref{prop:morphism2}]
It is well known that over each $\opens_\alpha$ one can find a chain map $\varphi_\alpha^0: (E_\alpha^\bullet, b_\alpha^0) \to (F_\alpha^\bullet, a_\alpha^0)$ that extends $f$ over $\opens_\alpha$. We wish to show that $\varphi^0$ can be extended to a morphism $\varphi: (E,b) \to (F,a)$.

By definition we have that such a morphism $\varphi$ satisfies $D \varphi = 0$ if and only if
\begin{equation}
\label{eq:D-closed}
	\delta \varphi^{k-1} +
	\sum_{j=0}^k a^{k-j} \varphi^j -
	\sum_{j=0}^k \varphi^j b^{k-j} = 0
\end{equation}
for all $k \geq 0$. Here and throughout the proof we take $\varphi^k$ to be zero for $k < 0$. For $k = 0$, we have that \eqref{eq:D-closed} is satisfied by the above.

The existence of $\varphi^1$ satisfying \eqref{eq:D-closed} follows from the fact that both $a_{\alpha \beta}^1 \varphi_{\beta}^0$ and $\varphi_\alpha^0 b_{\alpha \beta}^1$ are liftings of the map $f: \mathcal{E}|_{\opens_{\alpha \beta}} \to \sheafF|_{\opens_{\alpha \beta}}$, and hence they are $\partial$-homotopic, i.e., there exists $\varphi_{\alpha \beta}^1$ such that
\[
	a_{\alpha \beta}^1 \varphi_{\beta}^0 - \varphi_\alpha^0 b_{\alpha \beta}^1 =
	a_{\alpha}^0 \varphi_{\alpha \beta}^1 - \varphi_{\alpha \beta}^1 b_{\beta}^0.
\]

For $k \geq 2$, we will construct the $\varphi^k$ inductively as follows. For a given $m \geq 2$ suppose that $\varphi^0, \dots, \varphi^{m-1}$ are so that \eqref{eq:D-closed} is satisfied for $0 \leq k \leq m-1$. This is our induction hypothesis. In view of \eqref{eq:D-closed} we must now show that we can find $\varphi^m$ such that
\begin{equation}
\label{eq:partial}
	a^0 \varphi^m - \varphi^m b^0 =
	-\left(
	\delta \varphi^{m-1} +
	\sum_{j=0}^{m-1} a^{m-j} \varphi^j -
	\sum_{j=0}^{m-1} \varphi^j b^{m-j}
	\right).
\end{equation}

Note that over each $\opens_{\alpha_0 \dots \alpha_m}$ the left-hand side of \eqref{eq:partial} equals
\[
	(-1)^m a_{\alpha_0}^0 \varphi_{\alpha_0 \dots \alpha_m}^m -
	\varphi_{\alpha_0 \dots \alpha_m}^m b_{\alpha_m}^0 =
	(-1)^m \partial \varphi_{\alpha_0 \dots \alpha_m}^m,
\]
where $\partial$ denotes the differential for the complex $\Homs^\bullet(E_{\alpha_m},F_{\alpha_0})(\opens_{\alpha_0 \dots \alpha_m})$. Define
\begin{equation}
\label{eq:rho}
	\rho^m \defeq
	\delta \varphi^{m-1} +
	\sum_{j=0}^{m-1} a^{m-j} \varphi^j -
	\sum_{j=0}^{m-1} \varphi^j b^{m-j}.
\end{equation}
By Lemma~\ref{lemma:hom-complex}, it follows that $\varphi^m$ exists if $\partial \rho^m_{\alpha_0 \dots \alpha_m} = 0$ for all $(\alpha_0,\dots,\alpha_m)$.
We claim that
\[
	a^0 \rho^m + \rho^m b^0 = 0.
\]
If this claim holds, then over each $\opens_{\alpha_0 \dots \alpha_m}$, since $\rho_{\alpha_0 \dots \alpha_m}^m$ belongs to $\Homs^{1-m}(E_{\alpha_m},F_{\alpha_0})(\opens_{\alpha_0 \dots \alpha_m})$, we have that
\begin{align*}
	(-1)^m a_{\alpha_0}^0 \rho_{\alpha_0 \dots \alpha_m}^m +
	\rho_{\alpha_0 \dots \alpha_m}^m b_{\alpha_0}^0 &=
	(-1)^m \left(
	a_{\alpha_0}^0 \rho_{\alpha_0 \dots \alpha_m}^m -
	(-1)^{1-m} \rho_{\alpha_0 \dots \alpha_m}^m b_{\alpha_0}^0
	\right) \\ &=
	(-1)^m \partial \rho_{\alpha_0 \dots \alpha_m}^m = 0
\end{align*}
as required.

It remains to prove the claim. We begin by examining the expression
\begin{equation}
\label{eq:arho}
	a^0 \rho^m =
	a^0 \delta \varphi^{m-1} +
	a^0\sum_{j=0}^{m-1} a^{m-j} \varphi^j -
	a^0\sum_{j=0}^{m-1} \varphi^j b^{m-j}.
\end{equation}
For the second term in \eqref{eq:arho}, first note that
\[
	a^0 a^{m-j} =
	-\left(\delta a^{m-j-1} + \sum_{k=0}^{m-j-1} a^{m-j-k} a^k \right)
\]
by \eqref{eq:twisting-cochain2}. This gives that the second term equals
\begin{align*}
	&a^0 \sum_{j=0}^{m-1} a^{m-j} \varphi^j \\ &=
	-\sum_{j=0}^{m-1}
	\left(
	\left(
	\delta a^{m-j-1} + \sum_{k=0}^{m-j-1} a^{m-j-k} a^k
	\right) \varphi^j \right) \\ &=
	-\sum_{j=0}^{m-1}
	\left(
	\left(
	\delta a^{m-j-1} + \sum_{k=1}^{m-j-1} a^{m-j-k} a^k
	\right) \varphi^j +
	a^{m-j} a^0 \varphi^j
	\right) \\ &=
	-\sum_{j=0}^{m-1}
	\left(
	\left(
	\delta a^{m-j-1} + \sum_{k=1}^{m-j-1} a^{m-j-k} a^k
	\right) \varphi^j -
	a^{m-j}
	\left(
	\delta \varphi^{j-1} +
	\sum_{k=0}^{j-1} a^{j-k} \varphi^k -
	\sum_{k=0}^j \varphi^k b^{j-k}
	\right)
	\right),
\end{align*}
where the last equality follows by the induction hypothesis. We have that the first and second double-sum cancel. In view of \eqref{eq:deltader} and the fact that the $a^j$ and $\varphi^j$ have degree 1 and 0 resepectively we get that the above expression equals
\begin{align*}
	&-\sum_{j=0}^{m-1}
	\delta a^{m-j-1} \varphi^j +
	\sum_{j=0}^{m-2}
	a^{m-j-1} \delta \varphi^j -
	\sum_{j=0}^{m-1} \sum_{k=0}^j
	a^{m-j} \varphi^k b^{j-k} \\ = &-
	\sum_{j=0}^{m-1}
	\delta a^{m-j-1} \varphi^j +
	\sum_{j=0}^{m-1}
	a^{m-j-1} \delta \varphi^j -
	a^0 \delta \varphi^{m-1} -
	\sum_{j=1}^{m-1} \sum_{k=0}^{j-1}
	a^{m-j} \varphi^k b^{j-k} -
	\sum_{j=0}^{m-1}
	a^{m-j} \varphi^j b^0 \\ = &-
	\delta \left(
	\sum_{j=0}^{m-1}
	a^{m-j-1} \varphi^j
	\right) -
	a^0 \delta \varphi^{m-1} -
	\sum_{j=0}^{m-1}
	a^{m-j} \varphi^j b^0 - S,
\end{align*}
where
\[
	S \defeq
	\sum_{j=1}^{m-1} \sum_{k=0}^{j-1}
	a^{m-j} \varphi^k b^{j-k} =
	\sum_{j=1}^{m-1} \sum_{k=0}^{j-1}
	a^{j-k} \varphi^k b^{m-j}.
\]
Similarly, for the third term in \eqref{eq:arho}, we have that
\begin{align*}
	-a^0 \sum_{j=0}^{m-1} \varphi^j b^{m-j} &=
	\sum_{j=0}^{m-1}
	\left(
	\delta \varphi^{j-1} +
	\sum_{k=0}^{j-1} a^{j-k} \varphi^k -
	\sum_{k=0}^j \varphi^k b^{j-k}
	\right) b^{m-j} \\ &=
	\sum_{j=0}^{m-1}
	\delta \varphi^{j-1} b^{m-j} -
	\sum_{j=0}^{m-1} \sum_{k=0}^j
	\varphi^k b^{j-k} b^{m-j} + S \\ &=
	\sum_{j=0}^{m-2}
	\delta \varphi^j b^{m-j-1} -
	\sum_{j=0}^m \sum_{k=0}^j
	\varphi^k b^{j-k} b^{m-j} +
	\sum_{k=0}^m
	\varphi^k b^{m-k} b^0 + S \\ &=
	\sum_{j=0}^{m-2}
	\delta \varphi^j b^{m-j-1} -
	\sum_{k=0}^m \varphi^k
	\sum_{j=0}^{m-k}
	b^j b^{m-k-j} +
	\sum_{k=0}^{m-1}
	\varphi^k b^{m-k} b^0 + S.
\end{align*}
Since $b$ is a twisting cochain, by \eqref{eq:twisting-cochain2} we get that the above expression equals
\begin{align*}
	&\sum_{j=0}^{m-2}
	\delta \varphi^j b^{m-j-1} +
	\sum_{j=0}^m \varphi^j
	\delta b^{m-j-1} +
	\sum_{j=0}^{m-1}
	\varphi^j b^{m-j} b^0 + S \\ =
	&\sum_{j=0}^{m-1}
	\delta \varphi^j b^{m-j-1} +
	\sum_{j=0}^{m-1}
	\varphi^j \delta b^{m-j-1} -
	\delta \varphi^{m-1} b^0 +
	\sum_{j=0}^{m-1}
	\varphi^j b^{m-j} b^0 + S \\ =
	&\delta \left(
	\sum_{j=0}^{m-1}
	\varphi^j b^{m-j-1}
	\right) - \left(
	\delta \varphi^{m-1} -
	\sum_{j=0}^{m-1}
	\varphi^j b^{m-j}
	\right) b^0 + S,
\end{align*}
where we in the last equality have used that the $b^j$ and $\varphi^j$ have degree 1 and 0 respectively. Adding all three terms in \eqref{eq:arho} gives
\begin{align*}
	a^0 \rho^m &= 
	\delta \left(
	-\sum_{j=0}^{m-1}
	a^{m-j-1} \varphi^j +
	\sum_{j=0}^{m-1}
	\varphi^j b^{m-j-1}
	\right) -
	\left(
	\delta \varphi^{m-1} +
	\sum_{j=0}^{m-1} a^{m-j} \varphi^j -
	\sum_{j=0}^{m-1} \varphi^j b^{m-j}
	\right) b^0 \\ &=
	\delta \delta \varphi^{m-2} - \rho^m b^0 = - \rho^m b^0.
\end{align*}
Thus the claim follows.
\end{proof}

We end this paper with the proof of Theorem~1.3. We will need the following lemma, which follows directly from \cite{Wei}*{Proposition~2.33}.
\begin{lemma}
\label{lemma:phi-homotopy}
Let $(F,a)$ and $(E,b)$ be twisted resolutions of a complex of coherent $\holo_X$-modules $\sheafF^\bullet$. Then there exist morphisms $\varphi: (E,b) \to (F,a)$ and $\psi: (F,a) \to (E,b)$ such that $\psi \varphi$ and $\varphi \psi$ are homotopic to the identity.
\end{lemma}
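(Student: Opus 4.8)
The plan is to recognize this as the twisting-cochain analogue of the standard fact that any two projective resolutions of the same object are linked by homotopy-inverse chain maps lifting the identity, and to obtain those maps from Proposition~\ref{prop:morphism}. Concretely, applying Proposition~\ref{prop:morphism} to the chain map $\id \colon \sheafF^\bullet \to \sheafF^\bullet$ — once with $(E,b)$ as source and $(F,a)$ as target, once the other way around — yields morphisms of twisting cochains $\varphi \colon (E,b) \to (F,a)$ and $\psi \colon (F,a) \to (E,b)$, each lifting $\id$. Then $\varphi\psi$ and $\id_{(F,a)}$ are $D$-closed degree-$0$ elements of $C^\bullet(\cover,\Hom(F,F))$ both lifting $\id \colon \sheafF^\bullet \to \sheafF^\bullet$, and likewise $\psi\varphi$ and $\id_{(E,b)}$ both lift $\id$. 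So the lemma reduces to the expected uniqueness statement: two morphisms of twisting cochains that lift the same chain map differ by $D\alpha$ for some $\alpha$ of degree $-1$. Since such an $\alpha$ lies in $C^\bullet(\cover,\Hom(E,F))$ and is in particular $\dbar$-closed, we have $\nabla\alpha = D\alpha$, so this gives $\varphi\psi - \id = \nabla\alpha$ and $\psi\varphi - \id = \nabla\beta$, which are precisely the homotopies that appear in Theorem~\ref{thm:Rhomotopy}. This uniqueness statement is the content, in the twisting-cochain setting, of the uniqueness-up-to-homotopy assertion in \cite{Wei}*{Proposition~2.33}, established along the same lines as Proposition~\ref{prop:morphism}.

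To keep the argument self-contained I would reprove this uniqueness statement by an induction parallel to the proof of Proposition~\ref{prop:morphism2}. Let $\varphi, \varphi' \colon (E,b) \to (F,a)$ both lift $f \colon \sheafE^\bullet \to \sheafF^\bullet$, put $\theta \defeq \varphi - \varphi' = \sum_{k\geq0}\theta^k$ (so $D\theta = 0$, and $\theta^0_\alpha$ induces $0$ on cohomology over $\opens_\alpha$), and seek $\alpha = \sum_{k\geq0}\alpha^k$ with $\alpha^k \in C^k(\cover,\Hom^{-1-k}(E,F))$ solving $D\alpha = \theta$, i.e.
\[
	\delta\alpha^{m-1} + \sum_{j=0}^{m} a^{m-j}\alpha^{j} + \sum_{j=0}^{m} \alpha^{j} b^{m-j} = \theta^{m}, \qquad m \geq 0,
\]
with $\alpha^k \defeq 0$ for $k < 0$. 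For $m = 0$ this says $\alpha^0_\alpha$ is a $\partial$-homotopy from $0$ to $\theta^0_\alpha$, which exists because $\theta^0_\alpha$ induces $0$ on cohomology. For $m \geq 1$, the only terms involving $\alpha^m$ are $a^0\alpha^m$ and $\alpha^m b^0$; isolating these on the left, the remaining right-hand side is, over each $\opens_{\alpha_0\dots\alpha_m}$ and up to the sign dictated by the \v{C}ech conventions, a $\partial$-closed element of $\Homs^{-m}(E_{\alpha_m},F_{\alpha_0})(\opens_{\alpha_0\dots\alpha_m})$, hence $\partial$-exact by the appropriate version of Lemma~\ref{lemma:hom-complex}, and this produces $\alpha^m$. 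Feeding $\varphi' = \id_{(F,a)}$, $\theta = \varphi\psi - \id$ and then $\varphi' = \id_{(E,b)}$, $\theta = \psi\varphi - \id$ into this construction completes the proof.

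The one slightly delicate point, which I expect to be the main obstacle rather than anything conceptual, is checking that the level-$m$ right-hand side is indeed $\partial$-closed; this is the exact analogue of the identity $a^0\rho^m + \rho^m b^0 = 0$ verified at the end of the proof of Proposition~\ref{prop:morphism2}, and it is obtained the same way, by sign bookkeeping using $D^2 = 0$, the antiderivation property of $\delta$, the twisting-cochain relations \eqref{eq:twisting-cochain2} for $a$ and $b$, and $D\theta = 0$. One also has to note that for twisted resolutions of a genuine complex $\sheafF^\bullet$ (rather than a single coherent $\holo_X$-module) the Hom complex $\Homs^\bullet(E_\alpha,F_\alpha)(\opens_\alpha)$ need not be exact in all negative degrees, so one uses the version of Lemma~\ref{lemma:hom-complex} in which its negative-degree cohomology computes local hyper-$\Ext$ and the obstruction classes vanish because $\theta$ lifts $0$. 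A shorter route, if one imports more machinery, is to observe that twisted resolutions of a fixed bounded complex of coherent $\holo_X$-modules represent isomorphic objects in the homotopy category of twisted complexes, so that mutually homotopy-inverse $\varphi$ and $\psi$ exist for formal reasons; the induction above is just a hands-on proof of that fact.
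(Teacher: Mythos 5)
Your primary route --- obtain $\varphi$ and $\psi$ as lifts of the identity of $\sheafF^\bullet$ via Proposition~\ref{prop:morphism}, and then invoke the uniqueness-up-to-homotopy part of \cite{Wei}*{Proposition~2.33} to conclude that $\varphi\psi$ and $\psi\varphi$ are $D$-homotopic to the identity --- is essentially the paper's own argument: the paper's entire proof of this lemma is the citation of \cite{Wei}*{Proposition~2.33}, which contains both the existence and the uniqueness statements, and your observation that such a $D$-homotopy $\alpha$ is holomorphic, so that $D\alpha = \nabla\alpha$, matches how the homotopy is used in the proof of Theorem~\ref{thm:Rhomotopy}.

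A caution about the self-contained induction you sketch as an alternative: as written it has a gap. At level $m=0$ you justify the existence of the homotopy $\alpha^0_\alpha$ by saying that $\theta^0_\alpha$ induces zero on cohomology; for chain maps between complexes (as opposed to resolutions of a single module) inducing zero on cohomology does not imply null-homotopy. The correct local statement is that $E_\alpha^\bullet$ is a bounded complex of locally free sheaves over a Stein set, so morphisms out of it in the homotopy category are computed after inverting the quasi-isomorphism $F_\alpha^\bullet \to \sheafF^\bullet|_{\opens_\alpha}$; one must use that $\theta^0_\alpha$ becomes null-homotopic after composition with the augmentation, not merely that it vanishes on cohomology. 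More seriously, for $m \geq 1$ you cannot appeal to a variant of Lemma~\ref{lemma:hom-complex}: for twisted resolutions of a genuine complex the local Hom complexes $\Homs^\bullet(E_{\alpha_m},F_{\alpha_0})(\opens_{\alpha_0\dots\alpha_m})$ are in general \emph{not} exact in negative degrees (their cohomology computes local hyper-Ext, which can be nonzero there), so ``$\rho^m$ is $\partial$-closed'' does not yield ``$\partial$-exact''. Proving that the obstruction class actually vanishes requires comparing the twisted Hom complex with derived Hom, which is essentially the content of \cite{Wei}*{Proposition~2.33} itself; so the induction is not an independent proof unless that comparison is carried out, and the honest proof remains the citation, exactly as in the paper.
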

\begin{proof}[Proof of Theorem~\ref{thm:Rhomotopy}]
By Lemma~\ref{lemma:phi-homotopy} we have that $\varphi \psi$ is homotopic to the identity, i.e.,
\[
	\varphi \psi - \id = D \alpha = \nabla \alpha
\]
for some holomorphic $\alpha$. The desired homotopy is obtained by multiplying $\psi$ from the right on both sides of \eqref{eq:Rhomotopy}. The statement that $R^E$ is homotopic to $\psi R^F \varphi$ follows analoguosly.
\end{proof}
\begin{bibdiv}
\begin{biblist}
\bib{And1}{article}{
   author={Andersson, Mats},
   title={Coleff-Herrera currents, duality, and Noetherian operators},
   journal={Bull. Soc. Math. France},
   volume={139},
   date={2011},
   number={4},
   pages={535--554},
}

\bib{AS}{article}{
author = {Andersson, Mats},
author = {Samuelsson Kalm, Håkan},
year = {2010},
month = {10},
title = {A Dolbeault-Grothendieck lemma on complex spaces via Koppelman formulas},
volume = {190},
journal = {Inventiones mathematicae},
}

\bib{AW1}{article}{
   author={Andersson, Mats},
   author={Wulcan, Elizabeth},
   title={Residue currents with prescribed annihilator ideals},
   journal={Ann. Sci. \'{E}cole Norm. Sup. (4)},
   volume={40},
   date={2007},
   number={6},
   pages={985--1007},
}
\bib{AW2}{article}{
   author={Andersson, Mats},
   author={Wulcan, Elizabeth},
   title={Decomposition of residue currents},
   journal={J. Reine Angew. Math.},
   volume={638},
   date={2010},
   pages={103--118},
}
\bib{AW3}{article}{
   author={Andersson, Mats},
   author={Wulcan, Elizabeth},
   title={Direct images of semi-meromorphic currents},
   journal={Ann. Inst. Fourier (Grenoble)},
   volume={68},
   date={2018},
   number={2},
   pages={875--900},
}
\bib{AW4}{article}{
   author={Andersson, M.},
   author={Wulcan, E.},
   title={Global effective versions of the Brian\c{c}on-Skoda-Huneke theorem},
   journal={Invent. Math.},
   volume={200},
   date={2015},
   number={2},
   pages={607--651},
   issn={0020-9910},
}

\bib{CH}{book}{
   author={Coleff, N. R.},
   author={Herrera, M. E.},
   title={Les courants r\'{e}siduels associ\'{e}s \`a une forme m\'{e}romorphe},
   series={Lecture Notes in Mathematics},
   volume={633},
   publisher={Springer, Berlin},
   date={1978},
   pages={x+211},
   isbn={3-540-08651-X},
}

\bib{DS}{article}{
   author={Dickenstein, A.},
   author={Sessa, C.},
   title={Canonical representatives in moderate cohomology},
   journal={Invent. Math.},
   volume={80},
   date={1985},
   number={3},
   pages={417--434},
}

\bib{Eis}{book}{
   author={Eisenbud, David},
   title={Commutative algebra},
   series={Graduate Texts in Mathematics},
   volume={150},
   publisher={Springer-Verlag, New York},
   date={1995},
   pages={xvi+785},
   isbn={0-387-94268-8},
   isbn={0-387-94269-6},
}

\bib{Gil}{article}{
   author={Gillet, Henri},
   title={The $K$-theory of twisted complexes},
   conference={
      title={Applications of algebraic $K$-theory to algebraic geometry and
      number theory, Part I, II},
      address={Boulder, Colo.},
      date={1983},
   },
   book={
      series={Contemp. Math.},
      volume={55},
      publisher={Amer. Math. Soc., Providence, RI},
   },
   date={1986},
   pages={159--191},
}



\bib{JL}{article}{
	author={Johansson, Jimmy},
	author={L\"{a}rk\"{a}ng, Richard},
	title={An explicit isomorphism of different representations of the Ext functor using residue currents},
	 note = {\url{https://arxiv.org/abs/2109.00480}},
}

\bib{Lar}{article}{
   author={L\"{a}rk\"{a}ng, Richard},
   title={A comparison formula for residue currents},
   journal={Math. Scand.},
   volume={125},
   date={2019},
   number={1},
   pages={39--66},
   issn={0025-5521},
}



\bib{OTT}{article}{
   author={O'Brian, Nigel R.},
   author={Toledo, Domingo},
   author={Tong, Yue Lin L.},
   title={The trace map and characteristic classes for coherent sheaves},
   journal={Amer. J. Math.},
   volume={103},
   date={1981},
   number={2},
   pages={225--252},
   issn={0002-9327},
}

\bib{OTT2}{article}{
   author={O'Brian, Nigel R.},
   author={Toledo, Domingo},
   author={Tong, Yue Lin L.},
   title={A Grothendieck-Riemann-Roch formula for maps of complex manifolds},
   journal={Math. Ann.},
   volume={271},
   date={1985},
   number={4},
   pages={493--526},
   issn={0025-5831},
}


\bib{TT1}{article}{
   author={Toledo, Domingo},
   author={Tong, Yue Lin L.},
   title={Duality and intersection theory in complex manifolds. I},
   journal={Math. Ann.},
   volume={237},
   date={1978},
   number={1},
   pages={41--77},
   issn={0025-5831},
}

\bib{Voisin}{article}{
   author={Voisin, Claire},
   title={A counterexample to the Hodge conjecture extended to K\"{a}hler
   varieties},
   journal={Int. Math. Res. Not.},
   date={2002},
   number={20},
   pages={1057--1075},
}

\bib{Wei}{article}{
   author={Wei, Zhaoting},
   title={Twisted complexes on a ringed space as a dg-enhancement of the
   derived category of perfect complexes},
   journal={Eur. J. Math.},
   volume={2},
   date={2016},
   number={3},
   pages={716--759},
   issn={2199-675X},
}
\end{biblist}
\end{bibdiv}
\end{document}